\newcommand{\A}{\ensuremath{\mathbb{A}}}
\newcommand{\catC}{\ensuremath{\mathbb{C}}}
\newcommand{\N}{\ensuremath{\mathbb{N}}}
\newcommand{\G}{\ensuremath{\mathbb{G}}}
\newcommand{\B}{\ensuremath{\mathbb{B}}}
\newcommand{\X}{\ensuremath{\mathbb{X}}}
\newcommand{\PP}{\ensuremath{\mathcal{P}}}
\newcommand{\Asm}{\ensuremath{\mathsf{Asm}}}
\newcommand{\Gpd}{\ensuremath{\mathsf{Gpd}}}
\newcommand{\EE}{\ensuremath{\mathcal{E}}}
\newcommand{\Eff}{\ensuremath{\mathcal{E}\!f\!f}} 
\newcommand{\Mod}{\ensuremath{\mathsf{Mod}}} 
\newcommand{\psh}[1]{\ensuremath{\mathsf{Set}^{#1^{\mathsf{op}}}}}
\newcommand{\Set}{\ensuremath{\mathsf{Set}}}
\newcommand{\ul}[1]{\ensuremath{\underline{#1}}}
\newcommand{\pathg}[1]{\ensuremath{{#1}^{\downarrow}}}
\newcommand{\tDelta}{\ensuremath{\tilde{\Delta}}}
\newcommand{\hPP}{\ensuremath{\widehat{\PP}}}
\newcommand{\y}{\ensuremath{\mathsf{y}}} 
\newcommand{\yon}{\ensuremath{\mathsf{y}}} 
\newcommand{\exlex}[1]{\ensuremath{{#1}_{\mathsf{ex/lex}}}}
\newcommand{\exreg}[1]{\ensuremath{{#1}_{\mathsf{ex/reg}}}}
\newcommand{\reglex}[1]{\ensuremath{{#1}_{\mathsf{reg/lex}}}}
\newcommand{\op}[1]{\ensuremath{{#1}^{\mathrm{op}}}}
\newcommand{\dom}{\ensuremath{\mathsf{dom}}}
\newcommand{\cod}{\ensuremath{\mathsf{cod}}}
\newcommand{\mono}{\rightarrowtail}
\newcommand{\epi}{\twoheadrightarrow}
\newcommand{\toto}{\rightrightarrows}
\newcommand{\hook}{\ensuremath{\hookrightarrow}}
\newcommand{\cof}{\ensuremath{\rightarrowtail}}
\newcommand{\fib}{\ensuremath{\twoheadrightarrow}}
\renewcommand{\to}{\ensuremath{\rightarrow}}
\newcommand{\onto}{\ensuremath{\twoheadrightarrow}}
\newcommand{\HH}{\ensuremath{\mathbb{H}}}
\newcommand{\F}{\ensuremath{\mathbb{F}}}
\newcommand{\K}{\ensuremath{\mathbb{K}}}
\newcommand{\pair}[1]{\ensuremath{\langle #1\rangle}}
\newcommand{\Id}{\mathsf{Id}}
\newcommand{\id}[1]{\Id_{#1}}
\newtheorem{theorem}{Theorem}
\newtheorem*{theorem*}{Theorem}
\newtheorem{proposition}[theorem]{Proposition} 
\newtheorem{lemma}[theorem]{Lemma}
\newtheorem{corollary}[theorem]{Corollary} 
\theoremstyle{remark}
\newtheorem*{remarks*}{Remarks}
\theoremstyle{definition}
\newtheorem{definition}[theorem]{Definition}
\newcommand{\pbmark}{\ar[dr, phantom, "\lrcorner" very near start, shift right=.5ex]}	
\begin{document}

\title{Toward the effective 2-topos}
\author{Steve Awodey \and Jacopo Emmenegger}
\date{\today}
\dedicatory{to Pino Rosolini on the occasion of his 70th birthday}
\begin{abstract}
A candidate for the effective 2-topos is proposed and shown to include the effective 1-topos as its subcategory of 0-types.
\end{abstract}
\maketitle

\section{Introduction}

In the effective topos $\Eff$ ~\cite{Hyland1982} all maps $f : \N\to \N$ are computable.  Even more remarkably, $\Eff$ contains a small, full subcategory $\Mod$ that is internally complete but is \emph{not} a poset~\cite{HRR1990,Hyland1988}, a combination that is not possible in the classical logic of $\Set$, by a result of~\cite{Freyd:abcat}.  The elementary  topos $\Eff$ is not Grothendieck. Indeed, it is not even cocomplete;  for example, it lacks the countable coproduct  $\coprod_{n\in\N}1_n$.

A higher-dimensional version of $\Eff$ should provide an example of a non-Grothendieck elementary higher-topos, a concept  under current investigation~\cite{Shulman2018, Rasekh2022, Anel2025}.  One's first thought might be to simply take simplicial objects in $\Eff$, perhaps equipped with the Kan-Quillen model structure; but this fails for several reasons: 
\begin{enumerate}

\item\label{prob:constr} Some aspects of the usual Kan-Quillen model structure on simplicial sets  $\Set^{\op{\Delta}}$ rely on the classical logic of $\Set$, e.g.\  for exponentials $Y^X$ of Kan complexes to again be Kan (as was already determined by Stekelenburg~\cite{Stekelenburg2016}), see~\cite{BezemCoquand2015,BCP2015}.
 
\item\label{prob:pity} But the constructive version of the Kan-Quillen model structure on simplicial sets $\Set^{\op{\Delta}}$, given by \cite{GambinoHenry2022, GSS2022, Henry2020}, does not directly model the $\Pi$-types without some further modifications, since not all objects are cofibrant.

\item\label{prob:zeroty} Using one of the cubical models of type theory, such as \cite{CCHM:2018ctt}, \cite{ABCHFL}, one can produce constructive versions of the Kan-Quillen model structure that do model $\Pi$-types, as in~\cite{AwodeyCCMC, ACCRS}, but there is still a more fundamental problem: $\Eff$ is already an exact completion, and $\Eff^{\op{\Delta}}$ and $\Eff^{\op{\Box}}$ are, in some sense, higher exact completions of that; so we should not expect the 1-topos of 0-types in, say, the higher topos $\Eff^{\op{\Box}}$ (if it is one) to be equivalent to $\Eff$ itself. And indeed, it has been shown that the category of $0$-types in the model in cubical assemblies of \cite{AFS,Uemura:2018} is not equivalent to $\Eff$, as follows from the failure there of Church's thesis \cite{SwanUemura2021}.  
\end{enumerate}

Toward solving these problems, we first review how $\Eff$ is determined as the ($1$-)exact completion of the left exact category $\PP$ of partitioned assemblies. We then use that analysis to construct a (putative) $(2,1)$-exact completion $\Eff_2$ of $\PP$ that includes the 1-exact completion $\exlex{\PP} = \Eff$ as the subcategory of 0-types, which will then enjoy the same internal logic as the latter.  We remark that our meta-theory is classical, like that of the usual effective 1-topos, so that e.g.\ the category of 0-types in the standard model of type theory in Kan simplicial sets \cite{KLV:21} is (equivalent to) the category of sets, not that of setoids as in \cite{Shulman2023:derstd}.

\section{The effective 1-topos}

Recall from~\cite{RR1990} the construction of the effective topos $\Eff$ as an exact completion of the finitely complete category $\PP$ of partitioned assemblies. Recall also that this construction can be factored in two steps~\cite{Carboni1995}. First, the category of \emph{assemblies} $\Asm$ is the regular completion of $\PP$ (preserving finite limits), and then the category $\Eff$ is the exact completion (preserving finite limits and regular epimorphisms) of the regular category $\Asm$:
\begin{equation}\label{diag:subcats_of_psh}
\PP \hook \reglex{\PP} = \Asm \hook \exreg{\Asm}= \exreg{(\reglex{\PP})} = \exlex{\PP} = \Eff
\end{equation}

Let us consider each step, regarded as a subcategory of the (small) colimit completion, the category of presheaves $\widehat{\PP} = \psh{\PP}$~\cite{HuTholen:1996}:
\begin{itemize}
 \item  $\PP \hook \reglex{\PP} = \Asm \hook \widehat{\PP}$  adds all those presheaves $P/K$ that are \emph{regular images} (``kernel quotients'') of maps $P\to Q$ in $\PP$:
\[
\begin{tikzcd}
K \ar[r, shift right = .5ex] \ar[r, shift left = .5ex] & P \ar[rr] \ar[rd, two heads] && Q \\
	&& P/K \ar[ru, tail] &
\end{tikzcd}
\]
\item $\Asm \hook \exreg{\Asm} = \Eff \hook \widehat{\PP}$ consists of those presheaves $A/E$ that are quotients of equivalence relations $E\rightrightarrows A$ in $\Asm$:
\[
\begin{tikzcd}
E \ar[r, shift right = .5ex] \ar[r, shift left = .5ex] & A \ar[r, two heads] & A/E
\end{tikzcd}
\]
\end{itemize}

Now let us refine the factorization \eqref{diag:subcats_of_psh}, namely 
\[
\begin{tikzcd}
\PP \ar[r, hook] & \Asm \ar[r, hook] & \Eff \ar[r, hook] & \widehat{\PP}\,,
\end{tikzcd}
\]
by interpolating the following intermediate full subcategories:
\begin{equation}\label{diag:subcats_of_psh_simple}
\begin{tikzcd}
\PP \ar[rrrrr,hook] \ar[d,hook] &&&&& \widehat{\PP}\\
\mathsf{IndProj} \ar[r, hook] & \Asm \ar[r, hook] & \mathsf{Coh} \ar[r, hook] & \Eff \ar[r, hook] & \mathsf{Cpt} \ar[r, hook] & \mathsf{Sh}(\Asm) \ar[u, hook] 
\end{tikzcd}
\end{equation}

\begin{itemize}
 \item  $\mathsf{IndProj}$ consists of the \emph{indecomposable projectives}: projective objects $P \in \widehat{\PP}$ such that $0 \ncong P$, and $X+Y \cong P$ implies $X\cong P$ or $Y\cong P$.  These are just the representable objects $\yon{P}$ for $P\in\PP$, so the first step above is an equivalence of categories.  
 
 \item A presheaf $K$ is called \emph{(super-)compact}~\cite[Remark~D3.3.10]{Johnstone:2002seII} if every jointly epimorphic family $(E_i \to K)_i$ contains a single epimorphism $E_k \epi K$.   It follows that these are the objects $K$ that have a cover $\yon{P} \onto K$ by a representable.  Let $\mathsf{Cpt}$ be the full subcategory of compact presheaves (dropping the ``super-'' for the remainder of this paper).
 
\item $ \mathsf{Coh} $ consists of the \emph{(super-)coherent} presheaves:  those that are both compact and have a compact diagonal map $\Delta_C : C \to C \times C$, where in general a map $f : Y\to X$ is said to be \emph{(super-)compact} if for every compact object $K$ and map $K\to X$, the pullback $K' = K\times_X Y$ is a compact object:
\[
\begin{tikzcd}
K' \ar[r] \ar[d]  \pbmark & Y \ar[d, "f"]\\
K \ar[r] & X
\end{tikzcd}
\]
Equivalently, every pullback over a representable $\y{P}$  is compact.  Note that this is actually stronger than the usual notion of coherence due to our convention on the notion of (super-)compactness.

\item By $\mathsf{Sh}(\Asm)$ we mean the sheaves on the category $\Asm$ of assemblies for the \emph{regular epimorphism} topology, in which the covering sieves are those that contain a regular epimorphism. Since $\Asm$ is the regular completion of $\PP$, it is easy to see that the sheaves on these are just the presheaves on the latter, $\widehat{\PP} \cong \mathsf{Sh}(\Asm) \hook \widehat{\Asm}$.  It will nonetheless be convenient to have the description of $\Eff$ as a subcategory of $\mathsf{Sh}(\Asm)$.  
\end{itemize}

\begin{proposition}[\cite{Lack1999}]\label{prop:Lack}
For a regular category $\mathcal{R}$, the exact completion (preserving regular epis) can be described as the full subcategory of sheaves for the regular topology,
\[
\exreg{\mathcal{R}} \hook \mathsf{Sh}(\mathcal{R}, \mathsf{reg}) \,,
\]
on those objects $E$ with an \emph{exact presentation} by representables: thus those for which there is a kernel-quotient diagram
\[
\yon{R'} \rightrightarrows \yon{R} \epi E\,,
\]
in which $R, R'\in \mathcal{R}$ and $\yon{R'} \rightrightarrows \yon{R}$ is the kernel pair of $\yon{R} \epi E$, and $\yon{R} \epi E$ is the coequalizer of $\yon{R'} \rightrightarrows \yon{R}$.
\end{proposition}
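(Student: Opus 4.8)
The plan is to realize $\exreg{\mathcal{R}}$ through the universal property of the exact completion and then to identify its image inside the topos $\mathsf{Sh}(\mathcal{R},\mathsf{reg})$ as the exactly-presentable objects. The first step is to record that the regular topology is subcanonical: in a regular category every regular epimorphism is the coequalizer of its kernel pair, so for a representable $\yon{X}$ and a cover $R'\epi R$ the sheaf condition is exactly the assertion that $\hom(R,X)$ is the equalizer of $\hom(R',X)\toto\hom(R'\times_R R',X)$. Hence representables are sheaves, and $\yon:\mathcal{R}\to\mathsf{Sh}(\mathcal{R},\mathsf{reg})$ is full, faithful, and finite-limit preserving; it moreover sends regular epis to (regular) epis, since a covering sieve becomes an epimorphic family in the topos. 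Thus $\yon$ is a regular functor from $\mathcal{R}$ into an exact category, and by the universal property of the ex/reg completion it extends, essentially uniquely, to an exact functor $\Phi:\exreg{\mathcal{R}}\to\mathsf{Sh}(\mathcal{R},\mathsf{reg})$.

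Next I would identify the essential image of $\Phi$. The objects of $\exreg{\mathcal{R}}$ are equivalence relations $R'\toto R$ in $\mathcal{R}$; since $\yon$ preserves finite limits and the topos is exact, each such relation becomes an \emph{effective} equivalence relation $\yon{R'}\toto\yon{R}$, so $\Phi$ sends it to the quotient $E=\yon{R}/\yon{R'}$, whose kernel pair is $\yon{R'}\toto\yon{R}$ and whose quotient map is the coequalizer, \ie\ precisely an exact presentation. Conversely, given $E$ with an exact presentation $\yon{R'}\toto\yon{R}\epi E$, the fullness, faithfulness, and limit-preservation of $\yon$ on $\mathcal{R}$ allow me to reflect the kernel-pair equivalence relation $\yon{R'}\toto\yon{R}$ back to an equivalence relation $R'\toto R$ in $\mathcal{R}$ (reflexivity, symmetry, and transitivity are expressed by maps and equations that $\yon$ both preserves and reflects), and $\Phi$ carries this relation to $E$. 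So the essential image of $\Phi$ is exactly the full subcategory of exactly-presentable objects.

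It then remains to show that $\Phi$ is fully faithful. The key auxiliary fact is that representables are projective relative to the topology: given an epimorphism $Q_1\epi Q_2$ in the topos and a map $\yon{R}\to Q_2$, there is a cover $R''\epi R$ in $\mathcal{R}$ along which the composite lifts to $Q_1$, which is immediate from the definition of the regular topology. Using this together with the coequalizer property of the presentations, I would show that morphisms $\yon{R}/\yon{R'}\to\yon{S}/\yon{S'}$ in the topos correspond bijectively to functional relations between the two equivalence relations, which are exactly the morphisms of $\exreg{\mathcal{R}}$: a map out of the quotient restricts along the projective cover $\yon{R}\epi E$ and lifts, after passing to a cover of $R$, through $\yon{S}\epi E'$ to a map into $\yon{S}$, yielding a functional relation; and two such lifts induce the same map in the topos precisely when they are $S$-related, which is the identification defining morphisms in $\exreg{\mathcal{R}}$. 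Combining the three steps gives the equivalence $\exreg{\mathcal{R}}\simeq\mathcal{E}\hook\mathsf{Sh}(\mathcal{R},\mathsf{reg})$ asserted in the statement.

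The main obstacle is this last step. The essential-image computation is formal once subcanonicity and the universal property are in hand, but matching the hom-sets requires the descent argument that combines projectivity of representables (for fullness, via lifting a map out of a quotient through a cover of the domain and then through the covering $\yon{S}\epi E'$) with the coequalizer property (for faithfulness, to see that two relation-preserving maps induce the same morphism exactly when they factor through $S$). Care is needed here because a morphism of $\exreg{\mathcal{R}}$ is in general a genuine functional relation rather than an honest map of $\mathcal{R}$, and it is precisely the unavoidable passage to a cover of the domain in the lifting that accounts for this.
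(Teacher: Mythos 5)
Your argument is sound, but the comparison here is not with a proof in the paper --- the paper gives none, quoting the result from Lack's note --- and your route is genuinely different from Lack's. You presuppose that $\exreg{\mathcal{R}}$ exists, with its universal property and with the Carboni--Vitale description (objects are equivalence relations in $\mathcal{R}$, morphisms are functional relations), and then build a comparison functor $\Phi : \exreg{\mathcal{R}} \to \mathsf{Sh}(\mathcal{R},\mathsf{reg})$, compute its essential image as the exactly-presentable sheaves, and match hom-sets by the local-lifting argument you describe. Lack proceeds in the opposite direction: he \emph{constructs} the completion as the full subcategory of $\mathsf{Sh}(\mathcal{R},\mathsf{reg})$ on sheaves with an exact presentation by representables, proves directly that this subcategory is exact and contains $\mathcal{R}$ regularly, and verifies the universal property by extending a regular functor $F:\mathcal{R}\to\mathcal{D}$ (with $\mathcal{D}$ exact) sheaf-theoretically along the embeddings into the two sheaf categories --- thereby avoiding the calculus of functional relations altogether, which was the point of his note and is what lets the argument generalize to the infinitary case he also treats. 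Your route buys transparency at the object level and self-containedness modulo the Carboni--Vitale construction; Lack's buys existence of $\exreg{\mathcal{R}}$ as a by-product and a relation-free verification of the universal property. Your preliminary steps are all correct: the regular topology is subcanonical because regular epis in $\mathcal{R}$ are effective; $\yon$ is regular since covering sieves become epimorphic (hence, in a topos, regular-epimorphic) families; $\mathsf{Sh}(\mathcal{R},\mathsf{reg})$ is exact, being a Grothendieck topos, so $\Phi$ exists; and the essential-image computation, including reflecting the kernel pair $\yon{R'}\toto\yon{R}$ back to an equivalence relation in $\mathcal{R}$ via full faithfulness and limit-preservation of $\yon$, is as formal as you say. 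The one obligation you have flagged but not discharged --- that the functional relation obtained as the image of $\langle R''\to R,\, R''\to S\rangle$ is independent of the chosen cover $R''\epi R$ and lift, that single-valuedness and compatibility follow by reflecting the kernel-pair property of $\yon{S'}\toto\yon{S}$ along $\yon$, and that the two assignments are mutually inverse --- is genuinely the crux of your approach and must be written out, but it is a standard exactness computation and there is no obstruction to completing it.
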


\begin{proposition}\label{prop:CohIsEff}
 The category $\mathsf{Coh}$ is equivalent to $\Eff$.
\end{proposition}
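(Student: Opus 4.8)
The plan is to read off the equivalence from Lack's description of the exact completion (Proposition~\ref{prop:Lack}) applied to the regular category $\Asm$. Under the equivalence $\hPP\cong\mathsf{Sh}(\Asm)$ the Yoneda embedding $\Asm\hook\mathsf{Sh}(\Asm)$ sends an assembly to the presheaf on $\PP$ that represents it, so Proposition~\ref{prop:Lack} identifies $\Eff=\exreg{\Asm}$ with the full subcategory of $\hPP$ on those presheaves $E$ admitting an \emph{exact presentation by assemblies}: a diagram $A'\toto A\epi E$ with $A,A'\in\Asm$ in which $A'\toto A$ is the kernel pair of $A\epi E$ and $A\epi E$ is its coequalizer. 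Since $\mathsf{Coh}$ and $\Eff$ are both full subcategories of $\hPP$, it suffices to show that a presheaf is super-coherent exactly when it admits such a presentation.

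The technical backbone is the identification of the assemblies, viewed in $\hPP$, with the \emph{compact subobjects of representables}. That every assembly is such is immediate, as each is a regular image $P/K\mono\yon{Q}$, hence a subobject of a representable and a quotient of $\yon{P}$, so compact. Conversely, given a mono $S\mono\yon{Q}$ with $S$ compact, a representable cover $\yon{P}\onto S$ yields by Yoneda a map of $\PP$ whose regular epi--mono factorisation in the topos $\hPP$ identifies $S$ with its regular image, an assembly. I will also use that the lex embedding $\Asm\hook\hPP$ preserves finite limits, so that finite limits of assemblies computed in $\hPP$ are again assemblies, and that representables are projective in $\hPP$.

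Given an exact presentation $A'\toto A\epi E$, compactness of $E$ follows by composing a representable cover $\yon{P}\onto A$ with $A\epi E$. For the compact diagonal I use the pullback-over-representables formulation: the kernel pair exhibits $A'\mono A\times A$ as the pullback of $\Delta_E$ along $A\times A\onto E\times E$, and for an arbitrary $\yon{P}\to E\times E$, lifting along this cover by projectivity gives $\yon{P}\times_{E\times E}E\cong\yon{P}\times_{A\times A}A'$, a finite limit of assemblies and hence compact. Conversely, if $E$ is super-coherent I take a representable cover $\yon{P}\onto E$ and form its kernel pair $R\toto\yon{P}$, with $R\mono\yon{P}\times\yon{P}\cong\yon{P\times P}$. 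As $R$ is the pullback of the compact map $\Delta_E$ along the representable $\yon{P\times P}\to E\times E$ it is compact, hence an assembly by the backbone lemma; and since $\hPP$ is a topos, $\yon{P}\onto E$ is the coequalizer of $R\toto\yon{P}$, so $R\toto\yon{P}\epi E$ is an exact presentation by assemblies and $E\in\Eff$ by Proposition~\ref{prop:Lack}.

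The main obstacle I anticipate is the backbone lemma together with the two transfers of compactness across pullbacks that it powers: that the kernel pair $R$ is again an assembly (precisely where the compact diagonal is used) and that $\Delta_E$ is compact for an object carrying a presentation (where closure of assemblies under finite limits and projectivity of representables enter). A little bookkeeping is also needed to reconcile the two notions of ``representable'' across $\hPP\cong\mathsf{Sh}(\Asm)$ so that Proposition~\ref{prop:Lack} applies on the nose.
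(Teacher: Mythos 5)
Your proposal is correct and takes essentially the same route as the paper: both directions reduce coherence to the existence of an exact presentation by assemblies and then invoke Proposition~\ref{prop:Lack} for $\mathcal{R}=\Asm$, using projectivity of representables to lift maps across $A\times A\onto E\times E$ and identifying the kernel pair with a pullback of the diagonal. Your explicit ``backbone lemma'' (assemblies $=$ compact subobjects of representables, via the unique regular epi--mono factorisation in the topos $\hPP$) is left implicit in the paper's proof, and your use of the pullback-over-representables formulation of compact maps replaces the paper's covering argument for a general compact $K$, but these are only minor streamlinings of the same argument.
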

\begin{proof}
 The coherent presheaves are those $C$ that are compact $\yon{P} \epi C$ and have a compact diagonal $\Delta_C : C\to C\times C$.  Such a presheaf therefore has an exact presentation $K \toto \yon{P} \epi C$ by the objects $K$ and $\yon{P}$, each of which is an assembly: $\yon{P}$ because it is a partitioned assembly, and $K$ because it is compact, and therefore has a representable cover $\yon{Q} \epi K$, and also has a mono $K \mono \yon{P} \times\yon{P} \cong \yon{(P \times P)}$.  Conversely, if an object $C$ has an exact presentation $A \toto B \epi C$ by assemblies $A$ and $B$, then it is compact (because $B$ is compact). Let us show that $\Delta_C : C\to C\times C$ is compact.  We have a pullback square:
 \[
\begin{tikzcd}
A \ar[d]  \ar[r] \pbmark & C \ar[d, "\Delta"] \\
B\times B \ar[r, two heads]  & C \times C
\end{tikzcd}
\]
and if $K$ is any compact object and $c : K \to  C \times C$, then we need to show that (the domain of) $c^*\Delta$ is a compact object.  Since $K$ is compact, there is cover $k : \yon{P} \epi K$ and since $\yon{P}$ is projective, $ck: \yon{P} \to C \times C$ lifts across the epi $B\times B \epi C \times C$ via, say, $\ell : \yon{P} \to B \times B$.
\[
\begin{tikzcd}
& A \ar[d]  \ar[r] \pbmark & C \ar[d, "\Delta"] \\
& B\times B \ar[r, two heads]  & C \times C\\
\yon{P} \ar[ru, dotted, "\ell"] \ar[r, two heads, "k"] & K \ar[ru, "c"]
\end{tikzcd}
\]
It follows that $c^*\Delta$ is covered by $\ell^*A$.  But since $\yon{P}, A, B$ are all assemblies, so is the pullback $\ell^*A$, which is therefore covered by a representable, and so is compact.  Therefore $c^*\Delta$ is also compact.
 
Thus the coherent presheaves are equivalently described as those objects in $\hPP$ with an exact presentation by assemblies. But these are the representable sheaves in  $\mathsf{Sh}(\mathcal{R}, \mathsf{reg})$ when $\mathcal{R} = \Asm$.   By the foregoing Proposition this is $\exreg{\Asm} \cong \Eff$.
\end{proof}

Summarizing this section, we have the situation:
\[
\begin{tikzcd}
\PP \ar[r, hook] & \Asm \ar[r, hook] & \mathsf{Coh}\mathsf{Sh}(\Asm) \ar[r, hook] \ar[d,swap, "\sim"] & \mathsf{Sh}(\Asm) \ar[d,"\sim"]\\
		&& \Eff \ar[r, hook] & \widehat{\PP} 
\end{tikzcd}
\]
 We can recover the familiar description of $\Eff$ as the exact completion of $\PP$ described in terms of \emph{pseudo-equivalence relations} in $\PP$ (as in~\cite{Carboni1995}), from the above specification of coherent objects. Indeed, if $C\in \widehat{\PP}$ is coherent, then because it is compact, there is a representable cover $\yon{P} \onto C$, the kernel $K \rightrightarrows \yon{P}$ of which has $K$ compact (since the diagonal of $C$ is compact), and therefore $K$ is in turn covered  by a representable $\yon{Q} \onto K$. Since $K \rightrightarrows \yon{P}$ is an (actual) equivalence relation and $\yon{Q}$ is projective, it is easy to see that the resulting graph $\yon{Q} \rightrightarrows \yon{P}$ is a \emph{pseudo}-equivalence relation in $\widehat{\PP}$, and since it is in the image of the Yoneda embedding, its preimage $Q \rightrightarrows P$ is a pseudo-equivalence relation in $\PP$.

\section{Coherent groupoids}

In order to obtain a 2-topos containing the category $ \mathsf{Coh}\mathsf{Sh}(\Asm)$ of coherent sheaves as the 0-types, we shall take the internal \emph{groupoids} in $\mathsf{Sh}(\Asm) \simeq \widehat{\PP}$, generalizing the equivalence relations of Proposition~\ref{prop:Lack}.  We then cut down to the \emph{coherent} groupoids, so that the $0$-types are the coherent sheaves, and therefore equivalent to $\Eff$.

In other words, over the category $\hPP$ of presheaves, we shall have a pullback diagram of full subcategories:
\begin{equation}\label{diag:subcats}
\begin{tikzcd}
\Eff \ar[d, hook]  \ar[r, hook] \pbmark & \mathsf{Coh}\mathsf{Gpd}   \ar[d, hook] \\
\mathsf{Gpd}_0 \ar[r, hook]  & \mathsf{Gpd}
\end{tikzcd}
\end{equation}

We shall make frequent use of the fact that internal groupoids in the category $\hPP$ are the same thing as presheaves of groupoids, 
\[
\Gpd(\,\hPP\,) \cong [\op{\PP},  \Gpd]\,.
\]
To fix terminology, by a  \emph{(weak) equivalence} of such groupoids $e : \F \to \G$ we shall mean an internal functor that is essentially surjective on objects and fully faithful---internal conditions which can also be checked objectwise in $P\in\PP$.  A  \emph{strong (or homotopy) equivalence} is a functor that has a quasi-inverse---which of course need not obtain just because each $e_P : \F(P) \to \G(P$) has one.
 
\begin{lemma}\label{lem:disc_gpd_eqv}
Given a groupoid $\G=(G_1\rightrightarrows G_0)$ in $\hPP$,
a functor $\G\to \ul{X}$ into a discrete groupoid $\ul{X} = (\Delta : X\to X\times X)$ is an equivalence if, and only if, the map $G_0\epi X$ is epic and $G_1\toto G_0 $ is its kernel pair.
In that case, $\G$ is an equivalence relation, and $$\pi_0\G\cong G_0/G_1 \cong X$$ is its quotient.
A functor $\G\to \ul{X}$ is moreover a strong equivalence if, and only if, in addition, the map $G_0\epi X$ has a section.
\end{lemma}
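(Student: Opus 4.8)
The plan is to exploit the identification $\Gpd(\hPP)\cong[\op{\PP},\Gpd]$, so that the internal conditions defining an equivalence may be evaluated objectwise in $P\in\PP$, and then to unwind the standard internal formulations of \emph{essentially surjective} and \emph{fully faithful} against the very degenerate structure of the discrete codomain $\ul X$. The first observation is that, since $\ul X$ has only identity arrows, a functor $\G\to\ul X$ carries no information beyond its action $f:G_0\to X$ on objects: the arrow component is forced to equal $f\circ s=f\circ t$, so such functors correspond exactly to maps $f:G_0\to X$ with $fs=ft$.

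Next I would recall the internal criteria. A functor $F$ is essentially surjective precisely when the canonical map $t\circ\mathrm{pr}_2:G_0\times_X(\ul X)_1\to X$, with the fibre product formed along $f$ and the source map, is a cover. Because $(\ul X)_1=X$ with source and target both the identity, this pullback is simply $G_0$ and the map in question is $f$ itself; hence essential surjectivity is exactly the condition that $f$ be epic, which in the topos $\hPP$ is tested pointwise. Dually, $F$ is fully faithful precisely when the square with top edge $F_1:G_1\to(\ul X)_1$ and vertical maps $(s,t)$ is a pullback. For the discrete codomain the right-hand vertical map is $\Delta_X:X\to X\times X$, and the pullback of $\Delta_X$ along $f\times f$ is nothing but the kernel pair of $f$; so full faithfulness says exactly that $(s,t):G_1\to G_0\times G_0$ presents $G_1$ as the kernel pair of $f$. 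Combining these two reductions yields the first claim.

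For the stated consequences: once $G_1$ is a kernel pair, the map $(s,t)$ is monic and carries the canonical equivalence-relation structure, so $\G$ is an internal equivalence relation; and since $\hPP$ is a topos, the epi $f$ is the coequalizer of its own kernel pair, whence $\pi_0\G=G_0/G_1\cong X$. For the strong-equivalence clause I would argue in the presence of a weak equivalence $F$ (which a strong equivalence always is). If $f$ admits a section $s_0$, define $H:\ul X\to\G$ by $H_0=s_0$ on objects and identities on arrows; then $FH=\mathrm{id}_{\ul X}$ on the nose, and a natural isomorphism $\mathrm{id}_\G\Rightarrow HF$ is supplied pointwise by the unique arrow $a\to s_0 f(a)$, which exists because $f(a)=f s_0 f(a)$ places the pair $(a,s_0 f(a))$ in the kernel pair $G_1$. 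Conversely, a quasi-inverse $H$ gives $FH\cong\mathrm{id}_{\ul X}$; as $\ul X$ is discrete this isomorphism has identity components, forcing $f\circ H_0=\mathrm{id}_X$, so $H_0$ is a section of $f$.

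The only genuinely delicate point is getting the two internal reformulations exactly right against the degenerate structure of $\ul X$; everything else is bookkeeping. In particular, the naturality of the isomorphism $\mathrm{id}_\G\Rightarrow HF$, which would otherwise be the fiddliest verification, becomes automatic once we know $\G$ is an equivalence relation, since then there is at most one arrow between any two objects and all naturality squares commute trivially.
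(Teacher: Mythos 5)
Your proof is correct and takes essentially the same route as the paper's: identifying full faithfulness with the pullback of $\Delta_X$ along $e_0\times e_0$ (hence with the kernel pair), essential surjectivity into a discrete groupoid with epimorphy of $G_0\to X$, and upgrading a section to a quasi-inverse via the kernel-pair (pullback) property are exactly the paper's steps, merely spelled out in more detail. The paper's version is terser---it dismisses the first claim as ``just a rewording of the definition''---but the mathematical content, including your observation that naturality of $\mathrm{id}_\G\Rightarrow HF$ is automatic once $\G$ is known to be an equivalence relation, coincides with it.
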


\begin{proof}
The first claim is just a rewording of the definition:
an essentially-surjective-on-objects functor $e: \G\to \ul{X}$ into a discrete groupoid $\ul{X}$ is surjective on objects,
and it is fully faithful precisely when the square below is a pullback.
\[\begin{tikzcd}[column sep=7ex]
	G_1 \ar[d] \ar[r,"e_1"]  &  X  \ar[d, tail, "\Delta"]
	\\
	G_0\times G_0  \ar[r,swap,"{e_0\times e_0}"]  &  X\times X
\end{tikzcd}\]

If $e$ is a strong equivalence, then it clearly has a section.
Conversely, a section $s$ becomes a weak inverse using the fact that the square above is a pullback.
\end{proof}

\begin{lemma}\label{lemma:pspb_gpd}
Consider the two commutative diagrams of groupoids below,
where $\tDelta = \langle \dom, \cod \rangle : \pathg{\G} \to\G\times\G$ is the canonical functor from the path groupoid (the arrow category)
and the lower square in the right-hand diagram is a pullback.
The square on the left is a pseudo-pullback if the indicated comparison functor $k$ is a strong equivalence.
\[\begin{tikzcd}
	\A  \ar[d] \ar[r]  &  \G  \ar[d,"\Delta"]
	\\
	\B \ar[r]  &  \G \times \G
\end{tikzcd}
\hspace{9ex}
\begin{tikzcd}
	\A  \ar[dd,bend right=10ex] \ar[d,dotted,"k"] \ar[r]
	&  \G  \ar[dd,"\Delta",bend left=10ex] \ar[d]
	&\\
	\A'  \ar[r] \ar[d] \pbmark  &  \pathg{\G}  \ar[d,swap,"\tDelta"]
	\\
	\B  \ar[r]  &  \G \times \G
\end{tikzcd}\]
\end{lemma}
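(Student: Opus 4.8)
The plan is to recognise the strict pullback $\A'$ as the canonical pseudo-pullback of the left-hand cospan, and then to obtain the lemma directly from its universal property. Write $b=\langle b_1,b_2\rangle\colon\B\to\G\times\G$ for the bottom map. The starting observation is that $\tDelta=\langle\dom,\cod\rangle$ realises the path-object factorisation of the diagonal,
\[
\G \xrightarrow{\ i\ } \pathg{\G} \xrightarrow{\ \tDelta\ } \G\times\G,\qquad \Delta=\tDelta\circ i,
\]
where $i$ sends an object to its identity arrow; this $i$ is the map $\G\to\pathg{\G}$ appearing in the right-hand diagram.

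Next I would verify the two properties of this factorisation that make it a fibrant replacement of $\Delta$. First, $i$ is a \emph{strong} equivalence: its quasi-inverse is $\dom\colon\pathg{\G}\to\G$, and the tautological family of arrows $\gamma\colon\dom\gamma\to\cod\gamma$ assembles into a natural isomorphism $i\circ\dom\Rightarrow\mathrm{id}_{\pathg{\G}}$; since this is defined by the arrows of $\G$ themselves, it is natural in $P\in\PP$ and hence an internal quasi-inverse, not merely an objectwise one. Second, $\tDelta$ is a (split) isofibration: an isomorphism $(u,v)\colon\tDelta\gamma\xrightarrow{\sim}(a',b')$ lifts to the conjugate $v\gamma u^{-1}$, again by a formula natural in $P$. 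I would then invoke two standard facts, valid pointwise in $\Gpd$ and therefore in $\Gpd(\hPP)\cong[\op{\PP},\Gpd]$: the strict pullback of an isofibration is a pseudo-pullback, so the lower (pullback) square exhibits $\A'$ as the pseudo-pullback of $\tDelta$ along $b$; and a pseudo-pullback depends on its cospan only up to equivalence, so, since $\Delta=\tDelta\circ i$ with $i$ an equivalence, $\A'$ is also the pseudo-pullback of $\Delta$ along $b$. Alternatively one can write the comparison by hand: an object of $\A'$ is a pair $(\beta,\gamma)$ with $\gamma\colon b_1\beta\to b_2\beta$, an object of the iso-comma is $(x,\beta,\theta_1,\theta_2)$, and the assignments $(x,\beta,\theta_1,\theta_2)\mapsto(\beta,\theta_2\theta_1^{-1})$ and $(\beta,\gamma)\mapsto(b_1\beta,\beta,\mathrm{id},\gamma)$ are mutually quasi-inverse and natural in $P$.

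It then remains to identify $k$ with the comparison functor into $\A'$. The left square commutes strictly, so it is in particular a strict cone over $(\Delta,b)$ and induces a unique map $\A\to\A'$; by the coherence built into the right-hand diagram --- $\A\to\A'\to\pathg{\G}$ equals $\A\to\G\xrightarrow{i}\pathg{\G}$, and $\A\to\A'\to\B$ equals $\A\to\B$ --- this induced map is exactly $k$. By the universal property of the pseudo-pullback, the left square is a pseudo-pullback iff $k$ is an equivalence in $\Gpd(\hPP)$; and since the equivalences in this $2$-category are precisely the strong equivalences, the stated implication (indeed the full biconditional) follows.

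The step needing the most care is the strong-versus-weak bookkeeping. The universal property of a pseudo-pullback holds only up to equivalence, and the relevant equivalences in $[\op{\PP},\Gpd]$ are the strong (globally quasi-invertible) ones --- which is precisely why the hypothesis on $k$ must be strong rather than merely objectwise. Accordingly, the one thing I must not skip is checking that every equivalence used to present $\A'$ as the pseudo-pullback (the quasi-inverse of $i$ and the lifts witnessing that $\tDelta$ is an isofibration) is given by formulas natural in $P$, so that it lives in $\Gpd(\hPP)$; this internality is exactly what lets the universal property be transported along the strong equivalence $k$.
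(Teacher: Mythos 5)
Your proposal is correct and follows essentially the same route as the paper's own (much terser) proof: factor $\Delta = \tDelta \circ i$ through the path groupoid, use that the strict pullback along the isofibration $\tDelta$ computes the pseudo-pullback, and transport along the strong equivalences $i$ and $k$. Your extra care about internality (witnesses given by formulas natural in $P$, hence strong rather than merely objectwise equivalences) and your observation that the implication is in fact a biconditional are both sound refinements of the paper's two-line argument.
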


\begin{proof}
If $k$ is a strong equivalence then $\A\to\B$ is a pseudo-pullback of $\tDelta$.  But since $\G\simeq\pathg{\G}$ is a strong equivalence over $\G\times\G$, it is also a pseudo-pullback of $\Delta$.
\end{proof}

\begin{lemma}\label{lem:disc_gpd_pspb}
For a groupoid $\G$ and object $X \in \hPP$, a pseudo-pullback as on the left below may be computed as a strict pullback as on the right,
\begin{equation}\label{diag:pseudo_vs_strict}
\begin{tikzcd}
	\X'  \ar[d] \ar[r]  &  \G  \ar[d,"\Delta"]
	\\
	\ul{X}  \ar[r,swap, "x"]  &  \G \times \G
\end{tikzcd}
\hspace{11ex}
\begin{tikzcd}
	X'  \ar[d] \ar[r]  \pbmark &  G_1  \ar[d]
	\\
	X  \ar[r, swap,"x"]  &  G_0 \times G_0
\end{tikzcd}
\end{equation}
where ${\X}'$ may be taken to be the discrete groupoid ${\X}' = \ul{X'}$.

When $\G \times_{\F} \G$ is a pseudo-pullback of a homomorphism $f : \G\to \F$ against itself, and $\Delta : \G \to \G \times_{\F}\G$ is the diagonal,  then the pseudo-pullback $\X'$ over a discrete groupoid $\ul{X}$,  
\[\begin{tikzcd}
	\X'  \ar[d] \ar[r]  &  \G  \ar[d,"\Delta"]
	\\
	\ul{X}  \ar[r,swap,"x"]  &  \G \times_{\F} \G
\end{tikzcd}
\]
is equivalent to a discrete groupoid $\X'\simeq \ul{X'}$, and may again be computed as an associated strict pullback.
Note that, in particular, $\pi_0\ul{X} \cong X$ for any discrete groupoid $\ul{X}$.
\end{lemma}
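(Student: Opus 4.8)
The plan is to compute each pseudo-pullback by reducing it to an ordinary (strict) pullback and then exploiting that the discrete groupoid $\ul X$ carries only identity arrows.

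For the first statement I would apply Lemma~\ref{lemma:pspb_gpd} with $\B=\ul X$. Recall from its proof that $\G\simeq\pathg\G$ is a strong equivalence over $\G\times\G$, so $\Delta$ and $\tDelta=\langle\dom,\cod\rangle$ have the same pseudo-pullbacks. Taking the comparison $k$ to be the identity on the strict pullback $\A'$ of $\tDelta$ along $x$, the lemma exhibits $\A'$ as a pseudo-pullback of $\Delta$ along $x$, whence $\X'\simeq\A'$. It then remains to compute $\A'$ levelwise in $\hPP$. Its presheaf of objects is $X\times_{G_0\times G_0}G_1$, which is the strict pullback $X'$ on the right of \eqref{diag:pseudo_vs_strict}, since $(\pathg\G)_0=G_1$ and $\tDelta$ restricts there to $\langle\dom,\cod\rangle$; and every arrow of $\A'$ lies over an identity of $\ul X$, hence over an identity of $\G\times\G$, so (a morphism of $\pathg\G$ being a commuting square) it has identity sides and is therefore an identity. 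Thus $\A'=\ul{X'}$ and $\X'\simeq\ul{X'}$.

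For the second statement Lemma~\ref{lemma:pspb_gpd} does not apply verbatim, the target now being the pseudo-pullback $\G\times_\F\G$ rather than a product, so I would instead route through Lemma~\ref{lem:disc_gpd_eqv}. The key point is that the diagonal $\Delta:\G\to\G\times_\F\G$ remains \emph{faithful}: a morphism $\Delta g\to\Delta g'$ is a pair $(h,h)$ and so determines $h$. Unfolding the pseudo-pullback over $\ul X$, an object above $x_0\in X$ is a pair $(g,\phi)$ with $\phi:\Delta g\to x(x_0)$ an isomorphism of $\G\times_\F\G$, and a morphism $(g,\phi)\to(g',\phi')$ is an $h:g\to g'$ with $\Delta h=(\phi')^{-1}\phi$; faithfulness makes such an $h$ unique when it exists and kills all automorphisms, so $\X'$ is thin. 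The quotient $(\X')_0\epi\pi_0\X'$ then has kernel pair $(\X')_1$ (equivalence relations being effective in the presheaf topos $\hPP$), and Lemma~\ref{lem:disc_gpd_eqv} gives $\X'\simeq\ul{\pi_0\X'}$. To identify $\pi_0\X'$ I would normalise the witnessing isomorphism: writing $x(x_0)=(a(x_0),b(x_0),\theta(x_0))$ and $\phi=(\alpha,\beta)$ with $\alpha:g\to a(x_0)$ and $\beta:g\to b(x_0)$, the coherence condition in $\G\times_\F\G$ reads $\theta(x_0)=f(\beta\alpha^{-1})$, so each isomorphism class is represented by the single arrow $\gamma=\beta\alpha^{-1}:a(x_0)\to b(x_0)$ subject to $f\gamma=\theta(x_0)$. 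Hence $\pi_0\X'\cong X\times_{(\G\times_\F\G)_0}G_1$, the strict pullback of $x$ along $\langle\dom,\cod,f\rangle:G_1\to(\G\times_\F\G)_0$, which is the asserted associated strict pullback; the closing remark $\pi_0\ul X\cong X$ is immediate, a discrete groupoid having only identity arrows.

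Two matters need care, and I expect the second to be the main obstacle. First, one must check that the equivalences obtained are genuinely \emph{internal} (objectwise-natural) in $\hPP$ and not merely objectwise: this holds because the path groupoid, the quotient, and the normalisation are all natural in $P\in\PP$ and so assemble into morphisms of presheaves of groupoids. Second, and more delicately, one must verify that in the pseudo-pullback $\G\times_\F\G$ the coherence isomorphism $\theta$ is not free data but is pinned down as $f$ applied to the chosen arrow of $\G$; this absorption is exactly what makes the pseudo-fibres of $\Delta$ thin and lets the strict pullback along $\langle\dom,\cod,f\rangle$ compute $\pi_0\X'$ on the nose. Establishing that collapse cleanly is the crux of the second statement.
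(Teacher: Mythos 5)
Your proof is correct, and for the first half it is essentially the paper's own argument: invoke Lemma~\ref{lemma:pspb_gpd} with $k=\mathrm{id}$ to replace $\Delta$ by $\tDelta:\pathg{\G}\to\G\times\G$, take the strict pullback, and observe it is discrete with object presheaf $X\times_{G_0\times G_0}G_1$ because $\tDelta$ has discrete fibers. For the second half the paper says only that ``the argument is similar,'' the intended route being uniform with the first: replace the relative diagonal $\Delta:\G\to\G\times_{\F}\G$ by a \emph{relative path groupoid} with object presheaf $G_1$ mapping to $(\G\times_{\F}\G)_0$ via $\langle\dom,\cod,f\rangle$ (one checks, exactly by the cancellation $\theta=f(\beta\alpha^{-1})$ that you call the crux, that this is again a discrete fibration and that $\G$ embeds in it by identities as an equivalence over $\G\times_{\F}\G$), and then pull back strictly over $\ul{X}$. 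You instead unfold the iso-comma presentation of the pseudo-pullback directly: faithfulness of $\Delta$ makes $\X'$ thin, effectivity of equivalence relations in $\hPP$ plus Lemma~\ref{lem:disc_gpd_eqv} give $\X'\simeq\ul{\pi_0\X'}$, and your normalisation $\phi=(\alpha,\beta)\mapsto\gamma=\beta\alpha^{-1}$ identifies $\pi_0\X'$ with the strict pullback $X\times_{(\G\times_{\F}\G)_0}G_1$ --- the same answer the replacement method produces. Your computation is correct throughout (the bijection on isomorphism classes, surjectivity via $(g,\alpha,\beta)=(a,\mathrm{id}_a,\gamma)$, and naturality in $P\in\PP$ all check out), and it has the virtue of making explicit the coherence-absorption that the paper leaves implicit; what it gives up is the structural uniformity of the two cases, since the discrete-fibration formulation is what gets reused later (e.g.\ in Lemma~\ref{lemma:coh_gpd_equiv} via Proposition~\ref{prop:disc_fib}).
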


\begin{proof}
By Lemma~\ref{lemma:pspb_gpd}, the pseudo-pullback on the left in \eqref{diag:pseudo_vs_strict} can be computed by first replacing the diagonal $\Delta : \G \to \G
\times \G$ by the equivalent path-groupoid $\tDelta : \pathg{\G} \to \G\times\G$, and then taking a strict pullback in $\Gpd(\hPP)$.  We then apply the (right adjoint) forgetful functor $\Gpd(\hPP) \to \hPP$ to obtain the pullback diagram on the right.  But $\X'$ is discrete, since $\tDelta : \pathg{\G} \to \G\times\G$ has discrete fibers, so $\ul{X'} \cong \X'$.

The argument for the case $\G \times_{\F} \G$ is similar.
\end{proof}

\begin{definition}\label{def:ps_compact}
	A groupoid $\G = (G_1 \rightrightarrows G_0)$  in $\widehat{\PP}$ is called \emph{pseudo-compact} if there is an essentially-surjective-on-objects homomorphism $\ul{K}\to \G$ from the discrete groupoid $\ul{K}$ determined by a compact object $K \in\hPP$ (which we shall call a \emph{compact discrete groupoid}).
	Note that this internal condition can also be tested objectwise as a presheaf of groupoids.
\end{definition}

\begin{definition}[Coherent groupoid]\label{def:coh_gpd}
 A groupoid $\G = (G_1 \rightrightarrows G_0)$  in $\widehat{\PP}$ is called \emph{coherent} if the following conditions hold:
 \begin{enumerate}
 \item[(0)]  $\G$  is pseudo-compact,
 \item[(1)]   the diagonal $\Delta : \G \to \G \times \G$ is \emph{pseudo-compact},
 \item[(2)]   the second diagonal $\Delta_2 : \Delta \to \Delta \times \Delta$ is also \emph{pseudo-compact}.
 \end{enumerate}
\end{definition}

Condition (1) means that the pseudo-pullback of the diagonal $\Delta : \G \to \G \times \G$ over any homomorphism from a compact discrete groupoid $\ul{K}\to \G \times \G$ is again a compact discrete groupoid $\K' \in\hPP$\,: 
\[
\begin{tikzcd}
\K' \ar[d]  \ar[r] \pbmark & \G \ar[d, "\Delta"] \\
\ul{K} \ar[r]  & \G\times \G
\end{tikzcd}
\]
In virtue of Lemma~\ref{lem:disc_gpd_pspb}, ${\K}'$ can be assumed to be ${\K}' \cong \ul{K'}$  for $K'$ the strict pullback of $G_1 \to G_0 \times G_0$ along $k : K \to G_0 \times G_0$, and so the condition is simply that $\K' \simeq \ul{K'}$ is a compact object $K'$.

Similarly, Condition (2) involves the following pseudo-pullback of $\Delta$ against itself:
\[
\begin{tikzcd}
 \G \times_{\G \times \G} \G \ar[d]  \ar[r]  & \G  \ar[d, "\Delta"] \\
\G  \ar[r,swap, "\Delta"]  & \G \times \G
\end{tikzcd}
\]
The second diagonal (the ``paths between paths'') is then the canonical map $$\Delta_2 : \G \to  \G \times_{\G \times \G} \G\quad \text{(over $\G \times \G$)},$$
 and is again required to be pseudo-compact, in the previous sense.   

Finally, note that a \emph{discrete} coherent groupoid $\G$ is the discrete groupoid $\G = \underline{C}$ on a coherent object $C$, since a pseudo-pullback of discrete groupoids is a strict pullback.

\begin{lemma}\label{lemma:coh_gpd_equiv}
If $e: \F \to \G$ is an equivalence of groupoids in $\hPP$ and $\F$ is coherent, then so is $\G$.
\end{lemma}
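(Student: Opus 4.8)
The plan is to verify the three defining conditions of Definition~\ref{def:coh_gpd} for $\G$ one at a time, reducing conditions (1) and (2) to purely $1$-categorical statements in $\hPP$ by means of the strictification Lemmas~\ref{lemma:pspb_gpd} and~\ref{lem:disc_gpd_pspb}, and then transferring compactness \emph{against} the direction of pullback-stability by exploiting the covers supplied by the essential surjectivity of $e$, exactly as in the proof of Proposition~\ref{prop:CohIsEff}. Throughout I would use that all three conditions can be tested objectwise as presheaves of groupoids.

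Condition (0) is immediate: if $\ul K\to\F$ is essentially surjective on objects with $K$ compact, then its composite with $e:\F\to\G$ is again essentially surjective, since composites of essentially-surjective-on-objects functors are such; hence $\ul K\to\G$ witnesses that $\G$ is pseudo-compact.

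For condition (1) I would first observe that the full-faithfulness of $e$ says precisely that the square with vertical maps $(s,t):F_1\to F_0\times F_0$ and $(s,t):G_1\to G_0\times G_0$ and horizontal maps $e_1$ and $e_0\times e_0$ is a pullback in $\hPP$; in other words, $(s,t)_{\F}$ is the pullback of $(s,t)_{\G}$ along $e_0\times e_0$. As noted after Definition~\ref{def:coh_gpd}, testing pseudo-compactness of $\Delta_{\G}$ against a compact discrete groupoid $\ul K\to\G\times\G$ amounts, by Lemma~\ref{lem:disc_gpd_pspb}, to showing that the strict pullback $K\times_{G_0\times G_0}G_1$ is compact. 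I would then pick a representable cover $\yon P\epi K$; since $\yon P$ is projective and $e\times e$ is essentially surjective, the composite $\yon P\to G_0\times G_0$ lifts, up to an isomorphism $\iota$ in $\G\times\G$, to a map $p:\yon P\to F_0\times F_0$. Because the fibres of $(s,t):G_1\to G_0\times G_0$ are carried onto one another by the groupoid structure, $\iota$ induces $\yon P\times_{G_0\times G_0}G_1\cong\yon P\times_{F_0\times F_0}F_1$, and the right-hand side is compact by condition (1) for $\F$ applied to the representable $\yon P$. Pulling the cover $\yon P\epi K$ back then exhibits $K\times_{G_0\times G_0}G_1$ as covered by a compact object, hence compact.

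Condition (2) goes through by the identical mechanism, once the correct strict reformulation is in hand: unwinding the second diagonal through Lemmas~\ref{lemma:pspb_gpd} and~\ref{lem:disc_gpd_pspb} identifies $\G\times_{\G\times\G}\G$ with the groupoid of parallel pairs of isomorphisms, whose object of objects is $G_1\times_{G_0\times G_0}G_1$, and shows that pseudo-compactness of $\Delta_2$ is equivalent to the relative diagonal $\delta_{\G}:G_1\to G_1\times_{G_0\times G_0}G_1$ being a compact map. One then checks that full-faithfulness of $e$ again makes $\delta_{\F}$ the pullback of $\delta_{\G}$ along the induced map $F_1\times_{F_0\times F_0}F_1\to G_1\times_{G_0\times G_0}G_1$, and that this induced map is itself essentially surjective, so the same representable-cover lifting argument descends compactness from $\F$ to $\G$. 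I expect the main obstacle to be precisely this strictification for condition (2): verifying, through the path-groupoid replacement of Lemma~\ref{lemma:pspb_gpd}, that pseudo-compactness of the second diagonal really is governed by the relative diagonal of $(s,t)$, and that full-faithfulness of $e$ persists as a strict pullback at this second level. Once these identifications are secured, the descent of compactness along the covers provided by essential surjectivity---the one genuinely non-formal ingredient, since compactness transfers freely only in the pullback-stable direction---is uniform across conditions (1) and (2) and mirrors Proposition~\ref{prop:CohIsEff}.
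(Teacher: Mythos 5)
Your proof is correct and takes essentially the same route as the paper's: condition (0) by composition, and for (1) and (2) a representable cover of $K$ lifted across the essentially surjective $e\times e$ up to a 2-cell, with fibers transported along that 2-cell and compactness then descended along the pulled-back epimorphism. The only difference is presentational: your hand-rolled conjugation isomorphism $\yon{P}\times_{G_0\times G_0}G_1 \cong \yon{P}\times_{F_0\times F_0}F_1$ is exactly the paper's Lemma~\ref{lemma:postponed} applied to the discrete fibration $\pathg{\G}\to\G\times\G$ (whose object part is precisely your strict pullback square of source--target maps), and your explicit unwinding of condition (2) via the relative diagonal $G_1 \to G_1\times_{G_0\times G_0}G_1$ spells out, consistently with Lemma~\ref{lemma:coh_groupoid_recognize}, what the paper compresses into ``an analogous argument.''
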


\begin{proof}
If $\F$ is pseudo-compact, then $\G$ is, too, just by composing with~$e$.  

Suppose that $\Delta_\F : \F \to\F\times\F$ is pseudo-compact and consider the strict pullback of path groupoids
\[
\begin{tikzcd}
\pathg{\F} \ar[d, swap,"\tDelta"] \ar[r] \pbmark & \pathg{\G}  \ar[d, "\tDelta"] \\
 \F \times \F \ar[r, swap, "e \times e"]  & \G\times \G
\end{tikzcd}
\]

Given any compact $K$ and $k  : \ul{K} \to  \G\times \G$, we need to show that $k^*\pathg{\G}$ is (discrete and) compact. There is a cover $p:\yon{P} \onto K$, and by an argument similar to that for Proposition \ref{prop:CohIsEff}, it would now suffice show that the composite $kp : \ul{\yon{P}} \to \G\times \G$ lifts across $e \times e$, because $\pathg{\F}\to \F\times\F$ is assumed to be pseudo-compact.  Although $e \times e$ is not objectwise surjective, it is essentially surjective on objects and so there is a ``pseudo-lift'' $\ell : \ul{\yon{P}} \to \F\times\F$, meaning that there is a 2-cell $H : (e \times e)\ell \Rightarrow kp$. 
\[
\begin{tikzcd}
& \pathg{\F} \ar[d, swap,"\tDelta"] \ar[r] \pbmark & \pathg{\G}  \ar[d, "\tDelta"] \\
 & \F \times \F \ar[r, swap, "e \times e"] \ar[d, phantom, "\stackrel{H}{\Rightarrow}"]  & \G\times \G \\
 \ul{\yon{P}} \ar[r, swap, "p"] \ar[ru, "\ell"] & \ul{K} \ar[ru,swap, "k"] &
\end{tikzcd}
\]
It follows by Lemma \ref{lemma:postponed} below that there is an equivalence of groupoids  $h : \ell^*\pathg{\F} \simeq (kp)^*\pathg{\G}$ over $\ul{\yon{P}}$.  But $\ell^*\pathg{\F}$ is discrete compact, and $(kp)^*\pathg{\G}$ is discrete, and therefore $h$ is an iso, so $(kp)^*\pathg{\G}$ is also compact, as was required.  

The second diagonal of $\F$ is a pullback of that of $\G$ along the equivalence $e \times e$, so an analogous argument will show that it, too, is pseudo-compact.
\end{proof}
Recall that a functor $f: \F \to \G$ of groupoids is an \emph{isofibration} if the square
\[\begin{tikzcd}
\pathg{\F}  \ar[d,swap, "{\mathsf{cod}}"] \ar[r]  &  \pathg{\G}  \ar[d,"{\mathsf{cod}}"]
\\
\F  \ar[r,swap,"f"]  &  \G
\end{tikzcd}\]
is a pseudo-pullback (the same then holds with $\mathsf{dom}$ for $\mathsf{cod}$).

\begin{lemma}\label{lemma:postponed}
 Given an isofibration of groupoids $\X\to \G$, homomorphisms $f, g : \F \to \G$ and a 2-cell $\alpha : f \Rightarrow g$, there is a (strong) equivalence $f^*\X \to g^*\X$ over $\F$.
\end{lemma}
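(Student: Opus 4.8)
The plan is to exploit the defining pseudo-pullback property of the isofibration $p : \X \to \G$ to lift the 2-cell $\alpha$ into $\X$, and transport along it. Since we are working with groupoids, $\alpha$ is automatically invertible. It is convenient to read $f^*\X$ and $g^*\X$ as pseudo-pullbacks of $p$ along $f$ and $g$; because $p$ is an isofibration, these agree (up to equivalence over $\F$) with the strict pullbacks, so cones into them need only commute up to coherent isomorphism. The first thing I would record is the reformulation of the isofibration condition as a \emph{2-cell lifting} property: given a homomorphism $q : \mathcal{D} \to \X$ and a 2-cell $\gamma : p q \Rightarrow k$ with $k : \mathcal{D} \to \G$, the pseudo-pullback square relating $\pathg{\X}, \pathg{\G}, \X, \G$ along $\dom$ produces a homomorphism $r : \mathcal{D} \to \X$ together with an invertible 2-cell $\theta : q \Rightarrow r$ over $\G$ with $p\theta = \gamma$, and in particular $p r \cong k$. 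This is just the translation, via the identification of 2-cells with functors into path groupoids, of the slogan ``a 2-cell into $\G$ whose domain is lifted through $p$ admits a lift to $\X$''.

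Next I would construct the comparison functor $\Phi : f^*\X \to g^*\X$. Writing $\pi : f^*\X \to \F$ and $q : f^*\X \to \X$ for the two projections, so that $p q \cong f \pi$, I whisker $\alpha$ with $\pi$ to get $\alpha\pi : f\pi \Rightarrow g\pi$, a 2-cell whose domain $f\pi$ is lifted through $p$ by $q$. Applying the lifting property with $\gamma = \alpha\pi$ and $k = g\pi$ yields $r : f^*\X \to \X$ with $p r \cong g\pi$ together with $\theta : q \Rightarrow r$. The pair $(\pi, r)$ is then a cone over the cospan defining $g^*\X$, and its universal property delivers $\Phi : f^*\X \to g^*\X$ over $\F$ whose $\X$-component is $r$. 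Running the identical construction with the inverse 2-cell $\alpha^{-1} : g \Rightarrow f$ produces $\Psi : g^*\X \to f^*\X$ over $\F$.

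Finally I would verify that $\Phi$ and $\Psi$ are mutually quasi-inverse over $\F$. Each composite is the identity on the $\F$-component, and on the $\X$-component it differs from the relevant projection by the pasting of the two whiskered lifts $\theta$. Because $\alpha^{-1}\alpha = \mathrm{id}_f$ and $\alpha\alpha^{-1} = \mathrm{id}_g$, these pasted 2-cells lie over identities in $\G$, and the uniqueness up to coherent isomorphism afforded by the pseudo-pullback lets me convert them into invertible 2-cells $\Psi\Phi \simeq \mathrm{id}$ and $\Phi\Psi \simeq \mathrm{id}$ over $\F$. This exhibits $\Phi$ as a strong equivalence, as required.

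The main obstacle will be the bookkeeping of 2-cells needed to upgrade this to an \emph{honest} quasi-inverse rather than a merely weak equivalence: one must track how the lifted cells $\theta$ compose under $\Psi\Phi$ and $\Phi\Psi$, and genuinely use the coherent uniqueness built into the pseudo-pullback, all while keeping the strict and pseudo pullbacks aligned (which is exactly where the isofibration hypothesis is indispensable). I would also note a conceptual shortcut worth mentioning: in the 2-category of internal groupoids, $p$ being an isofibration means $f^*\X$ and $g^*\X$ already compute the pseudo-pullback of $p$ along $f$ and $g$, and pseudo-pullbacks are invariant under 2-isomorphism of the base map; the invertible $\alpha$ supplies precisely this isomorphism and hence the equivalence over $\F$ directly, with the explicit construction above serving to make the transport maps concrete for use in Lemma~\ref{lemma:coh_gpd_equiv}.
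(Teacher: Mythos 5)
Your proof is correct, and it rests on the same underlying mechanism as the paper's---transport along lifts of the invertible 2-cell through the isofibration---but it executes it by a genuinely different route. The paper's proof is two lines: it first constructs the \emph{universal} transport $u : \dom^*\X \to \cod^*\X$ over the path groupoid $\pathg{\G}$, which exists and is a strong equivalence precisely because $\X \to \G$ is an isofibration (this is your 2-cell lifting property applied once, in the tautological case $\mathcal{D} = \dom^*\X$), and then obtains the lemma by pulling $u$ back along the functor $\tilde{\alpha} : \F \to \pathg{\G}$ classifying $\alpha$, namely the unique lift of $\langle f, g\rangle$ through the discrete fibration $\tDelta$. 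Since $\tilde{\alpha}^*(\dom^*\X) = f^*\X$ and $\tilde{\alpha}^*(\cod^*\X) = g^*\X$ strictly, and since quasi-inverses and the witnessing 2-cells over the base restrict along strict pullbacks, the strong-equivalence property comes for free: there is no separate construction of $\Psi$ from $\alpha^{-1}$ and no verification that $\Psi\Phi \simeq \mathrm{id}$ and $\Phi\Psi \simeq \mathrm{id}$. Your direct, non-universal version must do exactly that bookkeeping, which you rightly flag as the main burden; it does go through, because the pasted lifts lie over identity 2-cells in $\G$ and hence define vertical isomorphisms into the strict pullbacks. One point to tighten: you waver between $pr \cong k$ and the strict equality needed for a cone into the \emph{strict} pullback $g^*\X$, which is the form in which the lemma is actually used in Lemma~\ref{lemma:coh_gpd_equiv}; but your own lifting property, stated with $p\theta = \gamma$ on the nose, already forces $pr = k$, so you should assert the strict equality and work with strict pullbacks throughout rather than hedging with pseudo-pullback cones. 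Your closing ``conceptual shortcut''---that pseudo-pullbacks are invariant under 2-isomorphism of the base leg---is in fact the cleanest summary of what the paper's pullback-of-the-universal-case argument establishes, so the two proofs meet there; what the paper's packaging buys is that the equivalence and its inverse arrive simultaneously by functoriality, while yours buys an explicit description of the transport functor on objects.
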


\begin{proof}
 Consider the universal case $u : \dom^*\X \to \cod^*\X$ over $\pathg{\G}$, which exists, and is an equivalence, because $\X\to \G$ is an iso-fibration.
 \[
\begin{tikzcd}
&\dom^*\X  \ar[rd] \ar[ld] \ar[rr, "u"] \ar[rr, swap, "\sim"] && \cod^*\X  \ar[ld] \ar[rd] & \\
\X \ar[rd]  && \ar[ld, "\dom"]   \pathg{\G}  \ar[rd,swap, "\cod"]  && \X \ar[ld] \\
& \G && \G &
\end{tikzcd}
\]
Now pull $u$ back along the unique lift $\tilde{\alpha} : \F \to \pathg{\G}$ of $\langle f, g\rangle : \F \to \G\times\G$ across $\tDelta : \pathg{\G} \to \G\times\G$ arising from $\alpha : f \Rightarrow g$.
\end{proof}

Coherent groupoids may be recognized as follows:

\begin{lemma}\label{lemma:coh_groupoid_recognize}
 Let  $\G = (G_1\rightrightarrows G_0)$ be a groupoid in $\hPP$. Then $\G$ is coherent in the sense of Definition \ref{def:coh_gpd}  if, and only if, there is a groupoid $\mathbb{K} = (K_1\rightrightarrows K_0)$ with an equivalence $\mathbb{K}\to \G$, such that:
 \begin{enumerate}
 \item $K_0$ is a compact object, i.e., one with a cover $\yon{P} \onto K_0$,
 \item the canonical map $K_1 \to K_0\times K_0$ is compact, 
 \item taking the following pullback
\[
\begin{tikzcd}
 K_1\times_{K_0\times K_0} K_1\ar[d]  \ar[r] \pbmark & K_1  \ar[d] \\
K_1  \ar[r,swap]  & K_0\times K_0
\end{tikzcd}
\]
the diagonal $K_1\to K_1\times_{K_0\times K_0} K_1$ (over $K_0\times K_0$) is compact.
\end{enumerate}
\end{lemma}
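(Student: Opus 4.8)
The plan is to read conditions (1)--(3) as the strict shadows of the three clauses of Definition~\ref{def:coh_gpd}: clause~(0) becomes compactness of $K_0$, clause~(1) becomes compactness of the map $K_1\to K_0\times K_0$, and clause~(2) becomes compactness of the strict diagonal $K_1\to K_1\times_{K_0\times K_0}K_1$. The bridge between the two formulations is Lemma~\ref{lem:disc_gpd_pspb}, which replaces the pseudo-pullbacks appearing in pseudo-compactness by honest strict pullbacks whenever we probe along a compact discrete groupoid. So I would first isolate, for an arbitrary groupoid $\HH=(H_1\rightrightarrows H_0)$ and a compact object $L$, explicit strict models for the two relevant pseudo-pullbacks, and then run the two implications off of them.

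For the first diagonal this is immediate from the first part of Lemma~\ref{lem:disc_gpd_pspb}: the pseudo-pullback of $\Delta:\HH\to\HH\times\HH$ over $\ul L\to\HH\times\HH$ is the discrete groupoid on the strict pullback $L\times_{H_0\times H_0}H_1$. For the second diagonal I would first record that the pseudo-pullback $\HH\times_{\HH\times\HH}\HH$ has object-of-objects the strict pullback $H_1\times_{H_0\times H_0}H_1$ of parallel pairs of arrows, and that $\Delta_2$ sends an object $z$ to the parallel pair $(\id{z},\id{z})$. The claim I then need is that the pseudo-pullback of $\Delta_2:\HH\to\HH\times_{\HH\times\HH}\HH$ over a compact discrete $\ul L\to\HH\times_{\HH\times\HH}\HH$, classified by $\lambda$, is equivalent to the discrete groupoid on the strict pullback of the diagonal $H_1\to H_1\times_{H_0\times H_0}H_1$ along $\lambda$. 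This is the technical heart of the argument. Lemma~\ref{lem:disc_gpd_pspb} already guarantees the pseudo-pullback is equivalent to a discrete groupoid; to pin down which one, I would compute its objects directly. An object is a triple $(z,l,\theta)$ with $\theta\colon\Delta_2(z)\cong\lambda(l)$, and unwinding the morphisms of $\HH\times_{\HH\times\HH}\HH$ shows that such a $\theta$ exists exactly when the parallel pair $\lambda(l)=(\alpha_l,\beta_l)$ satisfies $\alpha_l=\beta_l$, i.e.\ exactly when $\lambda(l)$ lies in the strict diagonal; moreover, for each such $l$ the remaining data $(z,\theta)$ form a contractible groupoid. Hence the pseudo-pullback collapses to the discrete groupoid on $L\times_{H_1\times_{H_0\times H_0}H_1}H_1$, as claimed.

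With this dictionary in hand the backward direction is quick. Given $\K$ satisfying (1)--(3) together with an equivalence $\K\to\G$, I would check that $\K$ is coherent: clause~(0) holds since $\ul{K_0}\to\K$ is essentially surjective and $K_0$ is compact; clause~(1) holds because, by the first model above, every pseudo-pullback of $\Delta_{\K}$ over a compact discrete groupoid is a strict pullback $L\times_{K_0\times K_0}K_1$, which is compact precisely because the map $K_1\to K_0\times K_0$ is compact; and clause~(2) holds because, by the second model, every pseudo-pullback of $\Delta_2$ is the strict pullback of the diagonal $K_1\to K_1\times_{K_0\times K_0}K_1$, which is compact by~(3). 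Coherence of $\G$ then follows from Lemma~\ref{lemma:coh_gpd_equiv}.

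For the forward direction, coherence of $\G$ supplies (clause~(0)) an essentially-surjective $k\colon\ul K\to\G$ with $K$ compact, and I would take $\K$ to be the full image of $k$: set $K_0=K$ and $K_1=(K\times K)\times_{G_0\times G_0}G_1$, so that $\K\to\G$ is fully faithful by construction and essentially surjective, hence an equivalence, and (1) holds trivially. For (2), pullback pasting gives $L\times_{K_0\times K_0}K_1\cong L\times_{G_0\times G_0}G_1$ for any compact $L\to K_0\times K_0$, and the right-hand side is the strict model of a pseudo-pullback of $\Delta_{\G}$, compact by clause~(1) for $\G$. For (3), I would first verify the two pullback squares relating the strict diagonals of $\K$ and $\G$ over $k\times k$, namely that $K_1\times_{K_0\times K_0}K_1\cong(K\times K)\times_{G_0\times G_0}(G_1\times_{G_0\times G_0}G_1)$ and that $K_1$ is the pullback of the diagonal $G_1\to G_1\times_{G_0\times G_0}G_1$ along this map; pasting then identifies the pullback of $K_1\to K_1\times_{K_0\times K_0}K_1$ along any compact $L$ with the strict model of a pseudo-pullback of $\Delta_2$ for $\G$, which is compact by clause~(2) for $\G$. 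The main obstacle throughout is the second-diagonal computation of the second paragraph: getting the morphisms of the iterated pseudo-pullback right and checking that the auxiliary data is genuinely contractible, so that the pseudo-pullback really does reduce to the strict pullback along the diagonal $K_1\to K_1\times_{K_0\times K_0}K_1$.
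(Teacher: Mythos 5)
Your proposal is correct and follows essentially the same route as the paper: the forward direction builds the same groupoid $\mathbb{K}$ (with $K_1$ the strict pullback of $G_1 \to G_0\times G_0$ along $k\times k$, via Lemma~\ref{lem:disc_gpd_pspb}), and the converse reduces to coherence of $\mathbb{K}$ via Lemma~\ref{lemma:coh_gpd_equiv} and Lemma~\ref{lem:disc_gpd_pspb}, exactly as in the paper. Your explicit computation that the pseudo-pullback of $\Delta_2$ over a compact discrete $\ul{L}$ collapses (by contractibility of the fibers, since a 2-cell $\Delta_2(z)\cong(\alpha_l,\beta_l)$ exists iff $\alpha_l=\beta_l$) to the strict pullback of $K_1\to K_1\times_{K_0\times K_0}K_1$ is precisely the detail the paper compresses into ``this follows easily from Lemma~\ref{lem:disc_gpd_pspb},'' and you carry it out correctly.
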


\begin{proof}
Suppose $\G = (G_1\rightrightarrows G_0)$ is coherent.  Since $\G$ is pseudo-compact, there is a compact object $K_0$ and an essentially surjective homomorphism $\ul{K_0} \to \G$.
Consider the pseudo-pullback of the diagonal of $\G$ to $\ul{K_0}\times \ul{K_0}$,
\[
\begin{tikzcd}
\K_1  \ar[d] \ar[r] & \G  \ar[d] \\
 \ul{K_0} \times \ul{K_0} \ar[r]  & \G\times \G
\end{tikzcd}
\]
where we may assume that $\K_1 = \ul{K_1}$ is discrete thanks to Lemma~\ref{lem:disc_gpd_pspb}.
Since $\Delta : \G\to \G\times\G$ is pseudo-compact and $K_0\times K_0$ is compact, $K_1$ is a compact object.  Moreover, $K_1 \rightrightarrows K_0$ is a groupoid structure on the object $K_0$, as it can be seen by constructing $K_1$ as the strict pullback on the right in Lemma~\ref{lem:disc_gpd_pspb}.

Now let $\mathbb{K} = (K_1 \rightrightarrows K_0)$, so that we indeed have $\mathbb{K} \simeq \G$, since $\ul{K_0} \to \G$ was essentially surjective on objects and $\mathbb{K} \to \G$ is fully faithful by construction.
Observe that the canonical map $K_1 \to K_0\times K_0$ is compact by Lemma~\ref{lem:disc_gpd_pspb}, since by construction it is the pseudo-pullback of the pseudo-compact map $\Delta : \G\to \G\times\G$.
It remains only to show that the map $K_1\to K_1\times_{K_0\times K_0} K_1$ (over $K_0\times K_0$) is compact.
\[
\begin{tikzcd}
K_1 \ar[r] \ar[rd, equals] & K_1\times_{K_0\times K_0} K_1\ar[d]  \ar[r] \pbmark & K_1  \ar[d] \\
& K_1  \ar[r,swap]  & K_0\times K_0
\end{tikzcd}
\]
But the map $K_1\to K_1\times_{K_0\times K_0} K_1$ is a pseudo-pullback of the second diagonal $\Delta_2 : \G \to  \G \times_{\G \times \G} \G$ (over $\G \times \G$) along $K_0 \times K_0 \to \G \times \G$, and is therefore also compact.

For the converse, suppose given an equivalence of groupoids $\mathbb{K} \to \G$ and suppose that for $\mathbb{K} = (K_1\rightrightarrows K_0)$ the object $K_0$ is compact and the canonical maps $K_1 \to K_0\times K_0$ and $K_1\to K_1\times_{K_0\times K_0} K_1$ are compact.  By Lemma \ref{lemma:coh_gpd_equiv} it suffices to show that $\mathbb{K}$ is coherent. 
This follows easily from Lemma \ref{lem:disc_gpd_pspb}.
\end{proof}

\section{The main result}

Ultimately, we shall require a Quillen model structure on the category $\mathsf{Gpd}(\hPP)$ in order to have a model of (1-trucated) HoTT (and so a $(2,1)$-topos); but for the present purpose, we need it in order to have a well-behaved subcategory of $0$-types for the comparison \eqref{diag:subcats}. 

There are various different model structures that can be put on the category $\mathsf{Gpd}(\EE)$ of internal groupoids in a Grothendieck topos $\EE$, see~\cite{JTStacks,EKVdL2005,Hollander2008,HollanderThesis}.  In the case of presheaves $\EE = \widehat{\PP}$, where
\[
 \mathsf{Gpd}(\,\hPP\,) \cong [\op{\PP},  \mathsf{Gpd}]\,,
\]
we shall use a model structure for which the weak equivalences are the pointwise equivalences of groupoids, and the fibrant objects are the \emph{stacks}, a homotopy theoretic generalization of sheaves.  The resulting model category will be Quillen equivalent to {e.g.} the Joyal-Tierney ``strong stacks'' model structure, which can also be described in terms of categories fibered in groupoids (or ``lax presheaves'' of groupoids), satisfying a descent condition (see~\cite{HollanderThesis}).  
Not only will the subcategory of coherent $0$-types then be equivalent to the coherent presheaves, and therefore to the realizability 1-topos $\Eff$, but the subcategory $\mathsf{Coh}\mathsf{Gpd}(\,\hPP\,) \hook \mathsf{Gpd}(\,\hPP\,)$ of all coherent groupoids should then admit a model of HoTT, and will therefore be a reasonable candidate for the realizability $(2,1)$-topos $\Eff_2$ (about which we shall say a bit more below).  

\begin{theorem}[\cite{JTStacks}]\label{thm:modelstructure}
 For any small category~$\catC$, the following classes of maps determine a model structure on the category $[\op{\catC},\mathsf{Gpd}]$ of presheaves of groupoids on $\catC$.
 \begin{itemize}
 \item The \emph{cofibrations} are the functors that are objectwise injective on objects.
 
  \item The \emph{weak equivalences} are the functors that are objectwise weak equivalences of groupoids: functors that are fully faithful and essentially surjective on objects.
  
  \item The \emph{fibrations} are the maps with the right-lifting property with respect to the trivial cofibrations (those that are weak equivalences).    
  \end{itemize}
\end{theorem}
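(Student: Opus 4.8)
The plan is to identify the three classes with the \emph{injective model structure} on the presheaf category $[\op{\catC},\Gpd]$ induced by the canonical (``folk'') model structure on $\Gpd$, and then to deduce its existence from the general fact that diagrams valued in a combinatorial model category carry such a structure. So I would first recall the folk model structure on $\Gpd$ itself: its weak equivalences are the equivalences of groupoids (fully faithful and essentially surjective functors), its cofibrations are the functors injective on objects, and its fibrations are the isofibrations. This structure is cofibrantly generated---by a finite family of maps between finitely presentable groupoids, for instance $\emptyset\to\mathbf 1$ and $\mathbf 1+\mathbf 1\to\II$ together with a further generator enforcing faithfulness, the generating trivial cofibration being $\mathbf 1\to\II$---and since $\Gpd$ is locally finitely presentable, the folk structure is combinatorial; moreover every object is both fibrant and cofibrant, and the structure is proper.

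Under this identification the stated classes are exactly the objectwise ones: a functor in $[\op{\catC},\Gpd]$ is an injective cofibration precisely when it is objectwise injective on objects, i.e.\ objectwise a folk cofibration, and a weak equivalence precisely when it is objectwise a folk weak equivalence, with fibrations determined by the right lifting property against the trivial cofibrations. The existence of such a model structure then follows from the general existence theorem for injective model structures on diagram categories (Jeff Smith's theorem; see also Lurie's \emph{Higher Topos Theory}, Prop.~A.2.8.2, and Barwick): for any combinatorial model category $\mathcal M$ and any small category $\mathcal D$, the functor category $[\mathcal D,\mathcal M]$ carries a model structure whose cofibrations and weak equivalences are objectwise. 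Taking $\mathcal M=\Gpd$ with the folk structure and $\mathcal D=\op{\catC}$ yields precisely the theorem's three classes.

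It remains to locate the real work. The axioms requiring bicompleteness, two-out-of-three, and closure under retracts are immediate and entirely objectwise: $[\op{\catC},\Gpd]$ has all small limits and colimits computed pointwise, two-out-of-three and retract-closure for objectwise equivalences are inherited from $\Gpd$, and the cofibrations and fibrations are retract-closed by construction. The substance lies in the factorization axiom and in the non-definitional half of the lifting axiom. Here the naive objectwise strategy fails: although one can factor a map $\F\to\G$ objectwise by the functorial mapping-cocylinder (path-groupoid) construction in $\Gpd$, and although each component of an objectwise acyclic fibration admits a lift against an objectwise-injective cofibration, these pointwise lifts need \emph{not} assemble into a natural transformation. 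This is exactly the main obstacle, and the crux of the whole theorem.

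Consequently the decisive step is to show that the injective cofibrations---which, unlike the projective ones, have no evident small generating family---are nonetheless cofibrantly generated by a \emph{set}. This is precisely what Smith's argument supplies: using that $\Gpd$ is locally finitely presentable and that the objectwise equivalences form an accessible subcategory of the arrow category (inherited from the accessibility of the class of equivalences in $\Gpd$, being defined by finitary conditions on objects up to isomorphism and on hom-sets), one produces a generating set for the injective trivial cofibrations and then runs the small object argument to obtain the functorial factorizations and the \emph{natural} lifts. Once cofibrant generation is in place, the remaining lifting property and the stated characterisation of the fibrations as the maps with the right lifting property against the trivial cofibrations follow formally.
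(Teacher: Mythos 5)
Your proposal is correct, but it proves the theorem by a genuinely different route than the one the paper relies on. The paper gives no proof at all: it attributes the result to Joyal and Tierney \cite{JTStacks}, whose original argument is a direct verification for internal groupoids in an arbitrary Grothendieck topos $\EE$ — explicit constructions with cocylinders, discrete fibrations, and strong equivalences (the paper even imports two of their ingredients separately, as Lemma~\ref{lemma:triv_fib_eqv} and Proposition~\ref{prop:disc_fib}) — and the paper then merely \emph{remarks} that in the presheaf case this agrees with the injective model structure used in \cite{Shulman2019}. You take that remark as the proof: identify the three classes with the injective structure on $[\op{\catC},\Gpd]$ induced by the folk model structure on $\Gpd$, check that folk $\Gpd$ is combinatorial (your generating (trivial) cofibrations are the standard ones, including the third generator enforcing faithfulness), and invoke the Smith--Lurie--Barwick existence theorem for injective structures on diagram categories. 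The identification of the classes is exact, since injective cofibrations and weak equivalences are by definition objectwise and injective fibrations are defined by the right lifting property against trivial cofibrations, matching the statement verbatim; and you correctly locate the crux, namely that pointwise lifts and factorizations need not be natural, so the real content is cofibrant generation of the injective (trivial) cofibrations via accessibility. What the comparison buys: your argument is shorter and yields combinatoriality and functorial factorizations for free, but it is specific to presheaf toposes and leaves the fibrations unexplicit (beyond the lifting-property definition), whereas Joyal--Tierney's direct proof works over any Grothendieck topos and produces the structural lemmas about trivial fibrations and discrete fibrations that this paper actually uses downstream. One small point of care if you were to write this out fully: the accessibility of the objectwise weak equivalences should be cited properly (it follows from combinatoriality of folk $\Gpd$ by the standard result that weak equivalences of a combinatorial model category form an accessible class), rather than from the informal ``finitary conditions'' gloss you give.
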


As a localization of simplicial presheaves, this agrees with the ``injective model structure'' used in \cite{Shulman2019} for the construction of univalent universes.  As explained there, the fibrations may be described as the objectwise fibrations of groupoids that are also algebras for the \emph{cobar monad}. In the present case of presheaves of groupoids the fibrant replacement amounts to taking a homotopy limit of descent data for \emph{all} composable arrows, as explained in \cite{Shulman2019}.   This model structure also agrees with that for \emph{strong stacks} first established by Joyal and Tierney in \cite{JTStacks}, since the cofibrations and weak equivalences agree with the ones specified there.  

\begin{lemma}[{\cite[Lemma~1]{JTStacks}}]\label{lemma:triv_fib_eqv}
A trivial fibration is a strong equivalence.
\end{lemma}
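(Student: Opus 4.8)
The plan is to give the standard two-step argument that in any model category a trivial fibration is a homotopy equivalence, but carried out so that the resulting homotopy equivalence is visibly a \emph{strong} equivalence in the sense defined above (a functor with a quasi-inverse). The key point specific to groupoids is that a left homotopy through the interval groupoid $\II$ (the codiscrete groupoid on two objects $0,1$) is exactly the data of an internal natural isomorphism, so the model-theoretic homotopy we produce automatically furnishes the $2$-cells required of a quasi-inverse. Since Theorem~\ref{thm:modelstructure} asserts that we have a genuine model structure, the trivial fibrations are precisely the maps with the right lifting property against \emph{all} cofibrations; and since cofibrations are the objectwise-injective-on-objects functors, every object is cofibrant (the map out of the empty presheaf of groupoids is vacuously objectwise injective on objects).

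So let $p : \F \to \G$ be a trivial fibration. First I would build a section. Because $\G$ is cofibrant, lifting the square with left edge $\emptyset \cof \G$, right edge $p$, top edge the unique map $\emptyset \to \F$, and bottom edge $\mathrm{id}_\G$, produces $s : \G \to \F$ with $ps = \mathrm{id}_\G$ \emph{strictly}. Next I would produce a homotopy $sp \simeq \mathrm{id}_\F$ using the cylinder $\F \times \II$. Objectwise the inclusion $\F + \F \cong \F \times \{0,1\} \to \F \times \II$ is the identity on objects, hence a cofibration, and $\F \times \II \xrightarrow{\mathrm{pr}} \F$ exhibits it as a cylinder object. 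One then solves the lifting problem
\[
\begin{tikzcd}[column sep=large]
\F + \F \ar[r, "{[sp,\,\mathrm{id}_\F]}"] \ar[d, tail] & \F \ar[d, two heads, "p"] \\
\F \times \II \ar[r, swap, "{p \circ \mathrm{pr}}"] \ar[ru, dotted, "H"] & \G
\end{tikzcd}
\]
which commutes since $p\circ sp = (ps)p = p$ on the first copy and $p\circ\mathrm{id}_\F = p$ on the second, both matching $p\circ\mathrm{pr}$ on the endpoints. As the left edge is a cofibration and $p$ a trivial fibration, a lift $H : \F \times \II \to \F$ exists.

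The crux of the argument is then to read off the quasi-inverse data from $H$. Evaluating $H$ at each object $P$ on the unique isomorphism $0 \to 1$ of $\II$ yields, for every object $x$ of $\F(P)$, an isomorphism $sp(x)\to x$, natural by functoriality of $H$; these assemble into an internal natural isomorphism $\theta : sp \cong \mathrm{id}_\F$. Together with the strict identity $ps = \mathrm{id}_\G$, this shows $s$ is a quasi-inverse to $p$, so $p$ is a strong equivalence. The only genuinely nonformal steps are the two groupoid-specific verifications: that $\F\times\II$ is a valid cylinder whose endpoint inclusion is a cofibration (checked objectwise, noting $\{0,1\}\hook\II$ need not be a weak equivalence, which is harmless for a cylinder), and the identification of the left homotopy $H$ with a natural isomorphism---the point at which ``homotopy equivalence'' is upgraded to ``strong equivalence.'' Everything else is the standard model-categorical extraction of a homotopy inverse for a trivial fibration.
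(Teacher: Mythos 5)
Your proof is correct, and its first half coincides with the paper's: both extract a strict section $s$ of $p$ by lifting against the cofibration $\emptyset \cof \G$, every object being cofibrant. Where you genuinely diverge is in upgrading $s$ to a quasi-inverse. The paper does this with a single groupoid-specific observation: since $\mathbbm{1}+\mathbbm{1} \to \mathbbm{2}$ is a cofibration, the trivial fibration $p$ is fully faithful, and a fully faithful functor admitting a strict section is automatically a strong equivalence---the component $\theta_x : sp(x) \to x$ is the unique morphism with $p(\theta_x) = \mathrm{id}_{p(x)}$, naturality following from faithfulness. You instead run the generic model-categorical argument: lift $[sp,\mathrm{id}_\F]$ against the cylinder inclusion $\F + \F \cof \F \times \II$ and decode the resulting left homotopy as an internal natural isomorphism $sp \cong \mathrm{id}_\F$, correctly noting that the endpoint inclusion is a cofibration because it is objectwise injective (indeed bijective) on objects, and that the decoding of $H$ as a natural isomorphism is exactly the point where ``homotopy equivalence'' becomes ``strong equivalence.'' The two constructions in fact produce the same datum: since the bottom edge of your lifting square is $p \circ \mathrm{pr}$, your homotopy satisfies $p(\theta_x) = \mathrm{id}_{p(x)}$, which is precisely the fiberwise isomorphism the paper obtains from fully-faithfulness. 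What your route buys is generality---it works verbatim in any model structure where $(-)\times\II$ gives a cylinder on cofibrant objects, with no appeal to fullness or faithfulness; what the paper's route buys is economy, isolating the one small generating cofibration $\mathbbm{1}+\mathbbm{1} \to \mathbbm{2}$ whose lifting property forces the natural isomorphism to exist, and dispensing with the cylinder entirely.
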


\begin{proof}
We obtain a section using the fact that every object is cofibrant.
It is a weak inverse since a trivial fibration is necessarily fully faithful, because the functor $\mathbbm{1}+\mathbbm{1} \to \mathbbm{2}$ is a cofibration.
\end{proof}

\begin{corollary}\label{corol:coh_gpd_equiv_back}
Let $e: \F \to \G$ be a trivial fibration (i.e.\ both an equivalence of groupoids and a fibration in the model structure).
Then $\F$ is coherent if and only if is $\G$ is coherent.
\end{corollary}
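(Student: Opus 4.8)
The plan is to reduce both implications to Lemma~\ref{lemma:coh_gpd_equiv}, using the fibration hypothesis only in order to reverse the direction of the equivalence. The forward implication is immediate and does not even use fibrancy: a trivial fibration is in particular a (weak) equivalence of groupoids, so if $\F$ is coherent, then Lemma~\ref{lemma:coh_gpd_equiv} applied directly to $e : \F \to \G$ shows that $\G$ is coherent.

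For the converse, the key point is that an arbitrary weak equivalence need not admit a quasi-inverse, so Lemma~\ref{lemma:coh_gpd_equiv} cannot simply be run backwards. This is exactly where I would invoke the fibration hypothesis. By Lemma~\ref{lemma:triv_fib_eqv} a trivial fibration is a \emph{strong} equivalence, and hence admits a genuine quasi-inverse $e' : \G \to \F$. Being a quasi-inverse, $e'$ comes with natural isomorphisms $ee' \cong \mathrm{id}_\G$ and $e'e \cong \mathrm{id}_\F$, so it is in particular fully faithful and essentially surjective on objects, i.e.\ itself a weak equivalence. Thus, assuming $\G$ is coherent, Lemma~\ref{lemma:coh_gpd_equiv} applied to $e' : \G \to \F$ yields that $\F$ is coherent, completing the biconditional.

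I expect no substantive obstacle, since the real content is already carried by the two cited lemmas: Lemma~\ref{lemma:coh_gpd_equiv} transports coherence along an equivalence, while Lemma~\ref{lemma:triv_fib_eqv} (which relies on every object being cofibrant) upgrades the trivial fibration to a strong equivalence so that coherence can be transported in both directions. The only detail worth verifying is that the quasi-inverse $e'$ extracted from the strong-equivalence data is again a weak equivalence in the objectwise sense required by Lemma~\ref{lemma:coh_gpd_equiv}; but this is automatic, as admitting an (objectwise) quasi-inverse is strictly stronger than being objectwise fully faithful and essentially surjective.
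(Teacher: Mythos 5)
Your proof is correct and is essentially the paper's own argument: the paper's one-line proof (``Lemma~\ref{lemma:triv_fib_eqv} allows Lemma~\ref{lemma:coh_gpd_equiv} to apply in both directions'') is exactly your expansion, namely that the trivial fibration is a strong equivalence by Lemma~\ref{lemma:triv_fib_eqv}, so its quasi-inverse is itself a weak equivalence along which Lemma~\ref{lemma:coh_gpd_equiv} transports coherence back from $\G$ to $\F$. Your extra check that the quasi-inverse is objectwise fully faithful and essentially surjective is a correct and sensible detail that the paper leaves implicit.
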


\begin{proof}
Lemma~\ref{lemma:triv_fib_eqv} allows Lemma~\ref{lemma:coh_gpd_equiv} to apply in both directions.
\end{proof}

\begin{proposition}\label{prop:modelstructure_coh}
The two factorization systems of the model structure on $[\op{\PP},\mathsf{Gpd}]$ from Theorem~\ref{thm:modelstructure} restrict to the full subcategory $\mathsf{Coh}\mathsf{Gpd}(\,\hPP\,)$ of coherent presheaves of groupoids.
\end{proposition}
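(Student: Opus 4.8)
The plan is to handle the two weak factorization systems of the model structure one at a time, reducing each to a single question: when a morphism between two coherent groupoids is factored, is the intermediate object again coherent? First I would note that, because $\mathsf{Coh}\mathsf{Gpd}(\,\hPP\,)$ is a \emph{full} subcategory, the weak lifting properties that define each class are inherited verbatim from the ambient category---any lifting square with all four corners coherent is literally a square in $[\op{\PP},\mathsf{Gpd}]$, and its solution is a map between coherent groupoids. Thus, together with the standard retract argument, it suffices to check that the functorial factorizations supplied by Theorem~\ref{thm:modelstructure} keep the intermediate object inside the subcategory.

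For the (cofibration, trivial fibration) system, given $f : \F \to \G$ with $\F,\G$ coherent I would factor it as a cofibration $i : \F \to \HH$ followed by a trivial fibration $p : \HH \to \G$. Since $p$ is a trivial fibration with coherent codomain $\G$, Corollary~\ref{corol:coh_gpd_equiv_back} gives that $\HH$ is coherent, using the ``codomain coherent implies domain coherent'' direction.

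For the (trivial cofibration, fibration) system, I would factor $f$ as a trivial cofibration $j : \F \to \HH$ followed by a fibration $q : \HH \to \G$. Now $j$ is in particular a weak equivalence, i.e.\ an equivalence of groupoids, with coherent domain $\F$, so Lemma~\ref{lemma:coh_gpd_equiv} applies directly to conclude that $\HH$ is coherent.

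I do not expect a genuine obstacle at this stage: the substantive work is already contained in the invariance of coherence under equivalences, namely Lemma~\ref{lemma:coh_gpd_equiv} and its two-sided sharpening for trivial fibrations in Corollary~\ref{corol:coh_gpd_equiv_back} (which itself rests on Lemma~\ref{lemma:triv_fib_eqv}). The only point demanding care is to keep straight the \emph{direction} in which coherence is transported: in the first factorization the new object sits over the coherent $\G$, so one needs the codomain-to-domain implication, whereas in the second it sits under the coherent $\F$, so one needs the domain-to-codomain implication. Since both implications are available, the proposition follows formally.
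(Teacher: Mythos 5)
Your proposal is correct and follows essentially the same route as the paper: the paper likewise reduces the claim to checking that each functorial factorization yields a coherent intermediate object, applying Lemma~\ref{lemma:coh_gpd_equiv} when the first factor is a weak equivalence and Corollary~\ref{corol:coh_gpd_equiv_back} when the second factor is a trivial fibration, with the same attention to the direction in which coherence is transported. Your preliminary remark that fullness makes the lifting properties automatic is left implicit in the paper but is exactly the right justification for the reduction.
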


\begin{proof}
It suffices to verify that the two factorizations produce coherent groupoids when applied to functors between coherent groupoids.
Consider a factorisation $f=p\circ i: \F \cof \HH \fib \G$ of a functor $f$ into a cofibration $i$ followed by a fibration $p$.
If $i$ is a weak equivalence, then $\HH$ is coherent by Lemma~\ref{lemma:coh_gpd_equiv}.
If $p$ is a weak equivalence, then $\HH$ is coherent by Corollary~\ref{corol:coh_gpd_equiv_back}.
\end{proof}

Recall that a functor of groupoids $f: \F \to \G$ is a \emph{discrete fibration}  if the square
\[\begin{tikzcd}
F_1  \ar[d,"{\partial_1}"'] \ar[r,"f_1"]  &  G_1  \ar[d,"{\partial_1}"]
\\
F_0  \ar[r,"f_0"]  &  G_0
\end{tikzcd}\]
is a pullback.
Note that, for every groupoid $\G$, the functor $\pathg{\G}\to \G \times \G$ is a discrete fibration.

\begin{proposition}[{\cite[Proposition~1]{JTStacks}}]\label{prop:disc_fib}
A discrete fibration has the unique right lifting property against trivial cofibrations, and is therefore a fibration in the sense of the model structure.
\end{proposition}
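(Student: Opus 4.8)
The plan is to reduce the statement to an ordinary groupoid lifting problem and solve it by hand, exploiting the \emph{unique} arrow-lifting enjoyed by discrete fibrations. First I would note that all the notions involved are pointwise: by Theorem~\ref{thm:modelstructure} a trivial cofibration of $[\op{\PP},\Gpd]$ is a natural transformation that is, at each $P\in\PP$, both injective on objects and an equivalence; and since limits in $\hPP$ are computed objectwise, the defining square of a discrete fibration $f:\F\to\G$ is a pullback at each $P$, so every $f_P$ is a discrete fibration of groupoids. Thus a square with a trivial cofibration $j:\A\to\B$ on the left and $f$ on the right breaks up into groupoid lifting problems indexed by $P$. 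If each has a \emph{unique} filler $\ell_P$, then the $\ell_P$ are automatically natural: for $\phi:P\to P'$ the two functors $\F(\phi)\circ\ell_{P'}$ and $\ell_P\circ\B(\phi)$ both solve the groupoid lifting problem at $P'$ with left leg $j_{P'}$, right leg $f_P$, and sides $a_P\circ\A(\phi)$ and $b_P\circ\B(\phi)$, hence coincide by uniqueness. So it suffices to treat one discrete fibration $f:\F\to\G$ of groupoids and one trivial cofibration $j:\A\to\B$.

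For the groupoid case I would use two inputs. From $f$ being a discrete fibration: for every object $v$ of $\F$ and every arrow of $\G$ with the appropriate endpoint $f_0(v)$ there is a \emph{unique} arrow of $\F$ with corresponding endpoint $v$ lying over it; in particular $f$ is faithful. From $j$ being a trivial cofibration: it is fully faithful and essentially surjective, and, being injective on objects, one may fix the lifting data canonically on the image of $j$. Given a square $(a:\A\to\F,\ b:\B\to\G)$ with $fa=bj$, I define the filler $\ell:\B\to\F$ as follows. For $y\in B_0$ choose, by essential surjectivity, an object $x_y\in A_0$ and an isomorphism $\beta_y:j(x_y)\to y$ (with $\beta_{j(x)}=\mathrm{id}$ on the image of $j$); then $b(\beta_y)$ is an arrow of $\G$ with endpoint $f(a(x_y))$, whose lift $\widetilde{\beta_y}$ with corresponding endpoint $a(x_y)$ determines $\ell(y)$ as its other endpoint, lying over $b(y)$. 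For an arrow $\gamma:y\to y'$ of $\B$, full faithfulness of $j$ gives a unique $\delta:x_y\to x_{y'}$ with $j(\delta)=\beta_{y'}^{-1}\gamma\beta_y$, and I set $\ell(\gamma)=\widetilde{\beta_{y'}}\circ a(\delta)\circ\widetilde{\beta_y}^{-1}$. A direct computation then gives $f\ell=b$, $\ell j=a$, and functoriality of $\ell$, the latter reducing formally to functoriality of $a$ and full faithfulness of $j$.

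The genuinely delicate point is not the construction of \emph{a} filler, which only uses \emph{existence} of lifts, but rather the \emph{uniqueness} of the filler, and hence the passage from the right lifting property to the \emph{unique} right lifting property; this is exactly where the uniqueness of discrete-fibration lifts is indispensable, and it is also what makes the pointwise fillers assemble into a natural transformation in the reduction above. I would argue it as follows: if $\ell,\ell'$ are two fillers, then for each $y$ the arrows $\ell(\beta_y)$ and $\ell'(\beta_y)$ both lie over $b(\beta_y)$ and share the endpoint $a(x_y)=\ell(j(x_y))=\ell'(j(x_y))$, so they agree by unique lifting, forcing $\ell(y)=\ell'(y)$; then on any $\gamma:y\to y'$ the arrows $\ell(\gamma),\ell'(\gamma)$ lie over $b(\gamma)$ with equal endpoints and so agree again by unique lifting, whence $\ell=\ell'$. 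Having produced this unique filler, $f$ has in particular the right lifting property against every trivial cofibration and is therefore a fibration in the sense of Theorem~\ref{thm:modelstructure}.
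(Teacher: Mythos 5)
Your proof is correct, but note that the paper gives no argument for this proposition at all: it is imported verbatim from \cite[Proposition~1]{JTStacks}, so your write-up is a self-contained substitute rather than a variant of an in-paper proof. Your two structural moves are sound and well chosen: since limits in $\hPP$ are computed objectwise, a discrete fibration in $[\op{\PP},\Gpd]$ is objectwise a discrete fibration of ordinary groupoids, and since trivial cofibrations are by definition objectwise injective-on-objects equivalences (Theorem~\ref{thm:modelstructure}), the whole problem reduces to plain groupoids; moreover you correctly identify that the \emph{uniqueness} of the pointwise fillers is not an optional strengthening but the engine of the reduction, since it is what makes the fillers $\ell_P$ assemble into a natural transformation without any coherence to check. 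The groupoid-level construction (normalizing $\beta_{j(x)}=\mathrm{id}$, which is well defined because $j$ is injective on objects, and defining $\ell$ on arrows via full faithfulness of $j$) and the uniqueness argument both check out; one small point worth a sentence is that the paper's defining pullback for a discrete fibration is along a single face map $\partial_1$, so unique lifts come with one prescribed endpoint, whereas your construction and uniqueness step at times lift with the other endpoint prescribed --- for \emph{groupoids} the two are interchangeable by passing to inverses, and you should say so explicitly. The difference from the cited source is one of setting: Joyal and Tierney work with internal groupoids in a topos, where essential surjectivity only provides an epimorphism and no global choice of the pairs $(x_y,\beta_y)$ is available, so their argument must be diagrammatic; your pointwise argument instead invokes the axiom of choice to select the isomorphisms $\beta_y$, which is legitimate here because the paper's meta-theory is explicitly classical and the model structure is objectwise. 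Their route buys generality over an arbitrary base; yours buys a short, elementary, fully explicit proof adequate for the category $[\op{\PP},\Gpd]$ actually used in the paper, with the filler's independence of all choices guaranteed a posteriori by the uniqueness you establish.
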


\begin{corollary}\label{corol:path_fib}
For every groupoid $\G$, the functor $\pathg{\G}\to \G \times \G$ is a fibration.
\end{corollary}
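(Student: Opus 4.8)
The plan is to derive the statement immediately from Proposition~\ref{prop:disc_fib}, once the map in question is recognized as a discrete fibration. Indeed, it was already noted just above Proposition~\ref{prop:disc_fib} that $\tDelta = \langle\dom,\cod\rangle : \pathg{\G} \to \G\times\G$ is a discrete fibration, and that proposition asserts that every discrete fibration is a fibration in the model structure; combining the two gives the corollary.

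For completeness I would first recall why $\tDelta$ is a discrete fibration, i.e.\ why the defining square for a discrete fibration is a pullback. An object of $\pathg{\G}$ is an arrow of $\G$, and a morphism of $\pathg{\G}$ is a commuting square in $\G$; the functor $\tDelta$ records a pair of opposite sides of such a square as an arrow of $\G\times\G$. Given an arrow of $\G\times\G$ together with a lift of the relevant endpoint to an object of $\pathg{\G}$ (one side of the square), the opposite side is then uniquely determined, since every arrow of $\G$ is invertible. This unique filling is precisely the pullback condition defining a discrete fibration, so $\tDelta$ is one.

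Applying Proposition~\ref{prop:disc_fib} now shows that $\tDelta : \pathg{\G}\to\G\times\G$ has the (unique) right lifting property against trivial cofibrations, and is therefore a fibration. There is essentially no obstacle here---the result is a direct corollary---and the only point worth making explicit is the elementary pullback verification above, which relies solely on invertibility of morphisms in the groupoid $\G$.
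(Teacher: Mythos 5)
Your proof is correct and is exactly the paper's argument: the paper states just before Proposition~\ref{prop:disc_fib} that $\pathg{\G}\to\G\times\G$ is a discrete fibration and then obtains the corollary by applying that proposition, with no further proof given. Your added verification that $\tDelta$ is a discrete fibration (unique filling of the square via invertibility of arrows in $\G$) is accurate and simply makes explicit what the paper leaves as a remark.
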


\begin{proposition}\label{prop:fib_eqv}
If $\F$ is fibrant, every equivalence $e: \F \to \G$ is a strong equivalence.
\end{proposition}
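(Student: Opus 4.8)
The plan is to use the factorizations supplied by the model structure of Theorem~\ref{thm:modelstructure} to split $e$ into two pieces, each of which can be recognized as a strong equivalence by methods already at hand. First I would factor the weak equivalence $e$ as a trivial cofibration followed by a fibration,
\[
e : \F \cof \HH \fib \G\,, \qquad e = p \circ i,
\]
with $i$ a trivial cofibration and $p$ a fibration. Since $e$ and $i$ are both weak equivalences, two-out-of-three makes $p$ a weak equivalence as well, hence a trivial fibration. Because strong equivalences compose (quasi-inverses compose to a quasi-inverse), it then suffices to exhibit each of $i$ and $p$ as a strong equivalence.

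The fibration factor is immediate: $p$ is a trivial fibration, so it is a strong equivalence by Lemma~\ref{lemma:triv_fib_eqv}. Note that this part uses only that every object is cofibrant and invokes no fibrancy hypothesis; it is the trivial cofibration $i$ that will require $\F$ fibrant.

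For $i : \F \cof \HH$, I would first use the fibrancy of $\F$: the unique map $\F \to 1$ is a fibration, so lifting $\mathrm{id}_\F$ against the trivial cofibration $i$ produces a retraction $r : \HH \to \F$ with $r i = \mathrm{id}_\F$ strictly. The remaining task, and the crux of the proof, is to upgrade this one-sided retraction to a quasi-inverse by constructing a natural isomorphism $i r \cong \mathrm{id}_\HH$. For this I would take the path groupoid $\pathg{\HH}$ as a path object and seek a functor $H : \HH \to \pathg{\HH}$ with $\dom H = i r$ and $\cod H = \mathrm{id}_\HH$, which is exactly a natural isomorphism $i r \Rightarrow \mathrm{id}_\HH$ since $\HH$ is a groupoid. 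Such an $H$ is a solution to the lifting problem
\[
\begin{tikzcd}
\F \ar[r] \ar[d, tail, "i"'] & \pathg{\HH} \ar[d, "{\langle\dom,\cod\rangle}"] \\
\HH \ar[r, "{\langle ir,\, \mathrm{id}\rangle}"'] \ar[ur, dashed, "H"] & \HH\times\HH
\end{tikzcd}
\]
whose top edge is the constant-path map precomposed with $i$; the square commutes because $i r i = i$, so both composites into $\HH\times\HH$ equal $\langle i, i\rangle$. Since the right-hand map $\langle\dom,\cod\rangle : \pathg{\HH} \to \HH\times\HH$ is a fibration by Corollary~\ref{corol:path_fib} and $i$ is a trivial cofibration, the lift $H$ exists. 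This gives $i r \cong \mathrm{id}_\HH$, and together with $r i = \mathrm{id}_\F$ shows that $i$ is a strong equivalence with quasi-inverse $r$, whence $e = p \circ i$ is a strong equivalence.

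The main obstacle to anticipate is that $\G$ itself is not assumed fibrant, so one cannot simply invoke the standard fact that a weak equivalence between fibrant--cofibrant objects is a homotopy equivalence. The factorization quarantines the fibrancy hypothesis in the single place where it is actually available, namely the domain $\F$, and the path-object lift against $\pathg{\HH} \to \HH\times\HH$ is precisely what converts the strict one-sided retraction into a genuine two-sided quasi-inverse.
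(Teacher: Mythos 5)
Your proof is correct and takes essentially the same approach as the paper's: reduce to the trivial cofibration case by factoring $e$ and disposing of the trivial fibration factor via Lemma~\ref{lemma:triv_fib_eqv}, use fibrancy of $\F$ to obtain a strict retraction $r$, and upgrade $r$ to a quasi-inverse by solving exactly the same lifting problem against the path-groupoid fibration of Corollary~\ref{corol:path_fib}. The only cosmetic difference is that the paper factors $e$ as a cofibration followed by a trivial fibration (with two-out-of-three making the cofibration trivial), whereas you take the (trivial cofibration, fibration) factorization and apply two-out-of-three to the fibration.
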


\begin{proof}
By Lemma~\ref{lemma:triv_fib_eqv} it is enough to prove the statement when $e$ is also a cofibration.
We thus obtain a retraction $r: \G \to \F$ of $e$ since $\F$ is fibrant.
It is a weak inverse because we can fill the square below by Corollary~\ref{corol:path_fib}.
\[\begin{tikzcd}[column sep=2ex]
\F \ar[d,"e"',tail,"\sim"{anchor=center,rotate=-90,yshift=.5ex}] \ar[r,"e"]
&[2ex]  \G  \ar[r]  &  \pathg{\G}  \ar[d,two heads]
\\
\G  \ar[rr,"\pair{e\circ r,\id{}}"'] \ar[urr,dashed]  &&  \G \times \G
\end{tikzcd}\]
\end{proof}

We make use of the model structure on $[\op{\PP},\mathsf{Gpd}]$ to determine the notion of a \emph{$0$-type} as a fibrant object $X$ for which the (first) diagonal $\Delta : X\to X \times X$ is a ``homotopy monomorphism'', meaning that the diagram
\[
\begin{tikzcd}
 X \ar[d, equals] \ar[r, equals] & X  \ar[d, "\Delta"] \\
 X \ar[r,swap, "\Delta"]  & X \times X
\end{tikzcd}
\]
is a homotopy pullback.

\begin{lemma}\label{lemma:fibzeroeqrel}
	Let $\G=(G_1 \toto G_0)$ be a fibrant 0-type.
	Then $\G$ is an equivalence relation, meaning that the map $G_1 \to G_0 \times G_0$ is monic.
\end{lemma}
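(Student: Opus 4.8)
The plan is to read off the $0$-type condition as a condition on the homotopy self-intersection of the diagonal, and to identify that self-intersection with the inertia (free-loop) groupoid of $\G$. First I would form the square from the definition of a $0$-type, whose right-hand and bottom legs are both $\Delta:\G\to\G\times\G$, and compute its homotopy pullback. By Corollary~\ref{corol:path_fib} the canonical map $\tDelta=\langle\dom,\cod\rangle:\pathg{\G}\to\G\times\G$ is a fibration, and $\G\to\pathg{\G}$ is an equivalence over $\G\times\G$; since $\G$ (hence $\G\times\G$ and $\pathg{\G}$) is fibrant, the homotopy (equivalently pseudo-) pullback of $\Delta$ against itself is computed, as in Lemma~\ref{lemma:pspb_gpd}, by the strict pullback
\[
\Lambda\G \;:=\; \G\times_{\G\times\G}\pathg{\G}.
\]
Unwinding this pullback objectwise shows that $\Lambda\G$ is the inertia groupoid of $\G$: its objects over $P\in\PP$ are pairs $(z,g)$ with $z\in G_0(P)$ and $g:z\to z$ an automorphism in $\G(P)$, and a morphism $(z,g)\to(z',g')$ is an arrow $w:z\to z'$ with $g'w=wg$.

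Next I would examine the gap map. The $0$-type hypothesis says precisely that the self-intersection square is a homotopy pullback, which means that the comparison functor $k:\G\to\Lambda\G$ from the top-left corner to the homotopy pullback is a weak equivalence. Concretely $k$ sends $z\mapsto(z,\mathrm{id}_z)$ and $w\mapsto w$. Being a weak equivalence, $k$ is in particular objectwise essentially surjective on objects. I would then observe that an object $(z,g)$ of $\Lambda\G(P)$ lies in the essential image of $k_P$ only if $g=\mathrm{id}_z$: any isomorphism $(z',\mathrm{id}_{z'})\to(z,g)$ is an arrow $w:z'\to z$ satisfying $g\,w=w$, whence $g=\mathrm{id}_z$ since $w$ is invertible. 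Essential surjectivity of $k_P$ therefore forces $\mathrm{Aut}_{\G(P)}(z)=\{\mathrm{id}_z\}$ for every object $z$ and every $P$; that is, $\G$ has no nontrivial automorphisms.

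Finally I would deduce that $\G$ is thin. Given parallel arrows $f,g:x\to y$ in $\G(P)$, the composite $g^{-1}f$ is an automorphism of $x$, hence equals $\mathrm{id}_x$ by the previous step, so $f=g$. Thus $\langle\dom,\cod\rangle:G_1\to G_0\times G_0$ is objectwise injective, hence monic in $\hPP$, which is exactly the assertion that $\G$ is an equivalence relation. I expect the only real work to be the first paragraph: correctly identifying the homotopy pullback of $\Delta$ against itself with the inertia groupoid, and justifying, via Corollary~\ref{corol:path_fib} and the fibrancy of $\G$, that the strict pullback along $\tDelta$ does compute it. Once that identification is in hand, the remaining steps are a routine unwinding of essential surjectivity together with the standard fact that a groupoid with no nontrivial automorphisms is an equivalence relation; fibrancy enters only to legitimize the homotopy-pullback computation.
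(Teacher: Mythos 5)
Your proof is correct and takes essentially the same approach as the paper's: both compute the homotopy self-intersection of $\Delta$ as a strict pullback along the fibration $\tDelta : \pathg{\G} \to \G\times\G$ (via Corollary~\ref{corol:path_fib} and fibrancy of $\G$), and then read off equality of parallel arrows from essential surjectivity of the resulting gap map, which the $0$-type condition makes a weak equivalence. The only differences are cosmetic: the paper replaces \emph{both} legs of the cospan, obtaining the groupoid $\mathbb{H}$ of parallel pairs of isomorphisms rather than your one-sided inertia groupoid $\Lambda\G$, and it upgrades the weak equivalence to a strong one via Proposition~\ref{prop:fib_eqv}, whereas your direct objectwise use of essential surjectivity (legitimate, since weak equivalences here are objectwise equivalences of groupoids) already suffices.
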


\begin{proof}
	Consider the commutative cube below, where the front face is a strict pullback of groupoids. In particular, objects of $\mathbb{H}$ consist of pairs of parallel isomorphisms $g_1,g_2\colon x_1 \toto x_2$ in $\G$.
	\[\begin{tikzcd}
		\G  \ar[dd,swap,"\id{}"] \ar[rr,"\id{}"] \ar[dr]
		&&  \G \ar[dd,"\Delta"{near end}] \ar[dr,"\rotatebox{-40}{$\sim$}"{yshift=-1ex}]
		&\\
		&  \mathbb{H}  \ar[rr, {crossing over}]  &&  \pathg{\G}  \ar[dd,two heads]
		\\
		\G  \ar[dr,"\rotatebox{-40}{$\sim$}"{yshift=-1ex}] 
		\ar[rr,swap,"\Delta"{near end}] && \G \times \G  \ar[dr,"\id{}"] 
		&\\
		&  \pathg{\G}  \ar[rr,two heads]   \ar[from=uu, {crossing over}]&&  \G \times \G
	\end{tikzcd}\]
	
	Since $\G$ is a 0-type, the back square is a homotopy pullback and, since $\G\times \G$ is fibrant and $\pathg{\G}\twoheadrightarrow\G\times \G$ is a fibrant replacement of $\Delta$ by Corollary~\ref{corol:path_fib}, the canonical comparison $\G\to \mathbb{H}$ is a weak equivalence.  
	By Proposition~\ref{prop:fib_eqv}, the comparison $\G\to \mathbb{H}$ is then a strong equivalence. In particular, every object $(g_1,g_2)$ of $\mathbb{H}$ is isomorphic to one of the form $(\mathrm{id}_x,\mathrm{id}_x)$ for some $x\in\G$.
	It follows that $g_1=g_2$ as required.
\end{proof}

We can now prove our main result, namely:

\begin{theorem}
 Let $\G = (G_1\rightrightarrows G_0)$ be a coherent groupoid and a fibrant $0$-type. Then $\G$ is equivalent to (the discrete groupoid arising from) a coherent object.
\end{theorem}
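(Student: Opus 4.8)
The plan is to reduce $\G$ to the discrete case, where coherence of a groupoid collapses to coherence of its underlying object. First I would invoke Lemma~\ref{lemma:fibzeroeqrel}: since $\G$ is a fibrant $0$-type, the map $G_1 \to G_0 \times G_0$ is monic, so $\G$ is an internal equivalence relation. Because $\hPP$ is a topos, this equivalence relation is effective, so the coequalizer $G_1 \toto G_0 \epi X$ exists and $G_1 \toto G_0$ is its kernel pair. Writing $X = \pi_0\G \cong G_0/G_1$, Lemma~\ref{lem:disc_gpd_eqv} then shows that the induced functor $\G \to \ul{X}$ into the discrete groupoid on $X$ is an equivalence.

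Next I would transport coherence along this equivalence. Applying Lemma~\ref{lemma:coh_gpd_equiv} to $\G \to \ul{X}$, whose source $\G$ is coherent by hypothesis, yields that $\ul{X}$ is a coherent groupoid. Finally, by the remark following Definition~\ref{def:coh_gpd}, a discrete coherent groupoid is precisely the discrete groupoid on a coherent object: for discrete groupoids every relevant pseudo-pullback is strict (Lemma~\ref{lem:disc_gpd_pspb}), condition~(2) holds automatically since the diagonal of $\ul{X}$ is already monic, and conditions~(0) and~(1) reduce exactly to $X$ being compact with compact diagonal. Hence $X$ is coherent, and $\G \simeq \ul{X}$ exhibits $\G$ as equivalent to the discrete groupoid on a coherent object.

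The conceptual crux lies entirely in the first reduction, namely the passage from a fibrant $0$-type to an honest equivalence relation, which is the content of the already-established Lemma~\ref{lemma:fibzeroeqrel}; granting that, the remainder is a matter of chaining Lemmas~\ref{lem:disc_gpd_eqv} and~\ref{lemma:coh_gpd_equiv} together with the discreteness remark, being careful to apply Lemma~\ref{lemma:coh_gpd_equiv} in the direction that carries coherence from the coherent source $\G$ to the target $\ul{X}$. The one point I would verify explicitly is the effectiveness of $\G$ as an equivalence relation in $\hPP$, so that $G_1$ really is the kernel pair of $G_0 \epi X$ and Lemma~\ref{lem:disc_gpd_eqv} genuinely applies; this is immediate since every presheaf topos is exact.
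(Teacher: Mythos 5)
Your proof is correct, but it takes a genuinely different route from the paper's in its second half. You share the crucial opening move: Lemma~\ref{lemma:fibzeroeqrel} makes $\G$ an equivalence relation, effectiveness of $\hPP$ (a presheaf topos, hence exact) gives the quotient $X = \pi_0\G = G_0/G_1$, and Lemma~\ref{lem:disc_gpd_eqv} gives the equivalence $\G \simeq \ul{X}$ --- all of which the paper also does, in essence. Where you diverge is in establishing coherence of the quotient. You transport coherence of $\G$ along the equivalence via Lemma~\ref{lemma:coh_gpd_equiv} (correctly applied in the source-to-target direction) and then invoke the remark after Definition~\ref{def:coh_gpd} that a discrete coherent groupoid is precisely $\ul{C}$ for a coherent object $C$; your fleshing-out of that remark is also right --- pseudo-pullbacks over discrete groupoids are strict (Lemma~\ref{lem:disc_gpd_pspb}), so conditions (0) and (1) reduce to compactness of $X$ and of $\Delta_X$, and condition (2) trivializes because $\Delta_X$ is monic, making the self-pullback $X \times_{X\times X} X \cong X$. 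The paper instead argues concretely: it first normalizes $\G$ using the recognition Lemma~\ref{lemma:coh_groupoid_recognize} (so that $G_0$ is compact and $G_1 \to G_0 \times G_0$ is a compact map), pulls $G_1$ back along a representable cover $\yon{P} \onto G_0$ to obtain a compact mono $K \mono \yon{P}\times\yon{P}$ --- hence an assembly --- and exhibits an exact presentation $K \toto \yon{P} \onto G$ by assemblies, concluding coherence of $G$ from the characterization in (the proof of) Proposition~\ref{prop:CohIsEff}. Your argument is shorter and purely formal, using only the stability lemmas already on record; the paper's pays for its extra work with an explicit presentation of $G$ as a quotient of assemblies, tying the result directly to the description of $\Eff$ via exact presentations in the style of Proposition~\ref{prop:Lack}. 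The one point you single out for verification --- that $G_1$ really is the kernel pair of $G_0 \epi X$ --- is indeed immediate from exactness of the presheaf topos, so there is no gap.
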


\begin{proof}
By Lemma \ref{lemma:coh_groupoid_recognize} we may assume, without loss of generality,  that $G_0$ is compact and therefore has a cover $\yon{P} \onto G_0$ and that $G_1 \to G_0 \times G_0$ is a compact map.  Thus its pullback $K$ to $\yon{P}\times \yon{P}$ has a cover $\yon{Q}\onto K$. 
\[
\begin{tikzcd}
\yon{Q} \ar[r, two heads] \ar[rd] & K \ar[d]  \ar[r] \pbmark & G_1  \ar[d] \\
& \yon{P} \times\yon{P} \ar[r,two heads]  & G_0\times G_0
\end{tikzcd}
\]
But since $\G$ is a fibrant $0$-type, by Lemma \ref{lemma:fibzeroeqrel} we may assume that the map $G_1 \to G_0 \times G_0$ is monic.  Therefore so is its pullback $K\to \yon{P} \times\yon{P}$.  Thus $K$ is an assembly. 

Now let $$G = \pi_0(\G) = G_0/G_1\,.$$   We then have a pullback square on the right below, and so $K\toto \yon{P}$ is the kernel pair of the composite epi $\yon{P} \onto G$. 
\begin{equation}\label{diag:theorem2}
\begin{tikzcd}
\yon{Q} \ar[r, two heads] \ar[rd] & K \ar[d, tail]  \ar[r] \pbmark & G_1  \ar[d] \ar[r]  \pbmark & G \ar[d, tail]  \\
& \yon{P} \times\yon{P} \ar[r,two heads]  & G_0\times G_0  \ar[r,two heads]  &  G \times G
\end{tikzcd}
\end{equation}
Since $K$ and $\yon{P}$ are both assemblies, and we have an exact presentation $K \rightrightarrows P \onto G$, the quotient  $G$ is a coherent object.  

Finally, for the discrete groupoid $\ul{G}$, we have an equivalence $\G \simeq \ul{G}$ and therefore a weak equivalence in the model structure, since the quotient map $G_0 \onto G$ makes the homomorphism $\G \to \ul{G}$ surjective on objects, and it is fully faithful by the pullback square on the right in \eqref{diag:theorem2}.
\end{proof}

Since every discrete groupoid is a $0$-type, and the discrete one arising from a coherent object is plainly coherent as a groupoid, we have the desired result:
\begin{corollary}
 The full sub(2-)category of fibrant $0$-types in $\mathsf{Coh}\mathsf{Gpd}(\hPP)$ is equivalent, as a 1-category, to the coherent objects in $\hPP$, and therefore to the effective topos,
 \[
 {\mathsf{Coh}\mathsf{Gpd}}(\hPP)_0\, \simeq\, {\mathsf{Coh}}(\hPP)\, \simeq\, \Eff\,.
 \]
\end{corollary}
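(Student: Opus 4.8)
The plan is to produce a coherent groupoid $\G$ equivalent to a discrete groupoid $\ul{G}$ on a coherent object $G$, by exhibiting an explicit \emph{exact presentation by assemblies} for the candidate quotient $G = \pi_0\G$. The overall strategy mirrors the proof of Proposition~\ref{prop:CohIsEff}: there we saw that an object of $\hPP$ is coherent precisely when it admits a presentation $A \toto B \onto C$ by assemblies $A,B$, so the whole task reduces to finding such a presentation, with the monicity and compactness of the relevant maps supplying the two assemblies.

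First I would invoke Lemma~\ref{lemma:coh_groupoid_recognize} to replace $\G$, up to equivalence, by a groupoid $\mathbb{K} = (K_1 \rightrightarrows K_0)$ in which $K_0$ is compact and $K_1 \to K_0 \times K_0$ is a compact map; by Lemma~\ref{lemma:coh_gpd_equiv} it is harmless to assume $\G$ itself already has this form, so $G_0$ is compact and $G_1 \to G_0 \times G_0$ is compact. Compactness of $G_0$ gives a representable cover $\yon{P} \onto G_0$, and pulling $G_1 \to G_0\times G_0$ back along $\yon{P}\times\yon{P}\cong\yon{(P\times P)}\onto G_0\times G_0$ produces an object $K$ which is compact (since $G_1\to G_0\times G_0$ is a compact map and $\yon{P}\times\yon{P}$ is compact), hence admits its own representable cover $\yon{Q}\onto K$.

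The crucial extra input, not available for an arbitrary coherent groupoid, is the \emph{fibrant $0$-type} hypothesis. Here I would apply Lemma~\ref{lemma:fibzeroeqrel}: since $\G$ is a fibrant $0$-type, it is an equivalence relation, so $G_1 \to G_0\times G_0$ is monic. Monicity is stable under pullback, so $K \to \yon{P}\times\yon{P}$ is monic; being a subobject of the assembly $\yon{(P\times P)}$ and carrying a representable cover $\yon{Q}\onto K$, the object $K$ is itself an \emph{assembly}. This is the step I expect to be the main obstacle in the sense that it is where the two hypotheses must combine: coherence alone yields only that $K$ is compact, whereas it is the $0$-type condition, funnelled through Lemma~\ref{lemma:fibzeroeqrel}, that upgrades this to monicity and hence to $K$ being an assembly rather than a general compact object.

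Finally I would set $G = \pi_0\G = G_0/G_1$ and assemble the two stacked pullback squares
\[
\begin{tikzcd}
K \ar[d, tail]  \ar[r] \pbmark & G_1  \ar[d] \ar[r]  \pbmark & G \ar[d, tail]  \\
\yon{P} \times\yon{P} \ar[r,two heads]  & G_0\times G_0  \ar[r,two heads]  &  G \times G
\end{tikzcd}
\]
exhibiting $K \toto \yon{P}$ as the kernel pair of the composite epimorphism $\yon{P}\onto G$. Since $K$ and $\yon{P}$ are both assemblies, this is an exact presentation of $G$ by assemblies, so $G$ is a coherent object by (the proof of) Proposition~\ref{prop:CohIsEff}. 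To close, I would check that the canonical homomorphism $\G \to \ul{G}$ is a weak equivalence: it is surjective on objects because $G_0 \onto G$ is epic, and fully faithful because the right-hand square above is a pullback, i.e.\ $G_1 \cong G_0\times_G G_0$. This gives the required equivalence $\G \simeq \ul{G}$ with $G$ coherent, completing the proof.
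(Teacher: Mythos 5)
Your proposal reproduces the paper's proof of the main Theorem essentially step for step: the same reduction via Lemma~\ref{lemma:coh_groupoid_recognize} to $G_0$ compact and $G_1 \to G_0\times G_0$ a compact map, the same use of Lemma~\ref{lemma:fibzeroeqrel} to upgrade the compact $K$ to a monic subobject of $\yon{(P\times P)}$ and hence an assembly, the same exact presentation $K \rightrightarrows \yon{P} \onto G = \pi_0\G$, and the same verification that $\G \to \ul{G}$ is a weak equivalence via the right-hand pullback square --- so it is correct and takes the paper's approach. Note only that the statement posed is the Corollary, which beyond this Theorem needs the one-line converse that the discrete groupoid on a coherent object is itself a coherent fibrant $0$-type (the paper supplies exactly this sentence); your proof covers the substantive direction.
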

\medskip

\section{Related and Future work}

Recent work of Anthony Agwu \cite{AA} builds a realizability model of HoTT using groupoids in a category of assemblies, while the work of Calum Hughes \cite{Hughes2025} can be used to do the same for groupoids in a realizability topos.  As explained in Problem~\eqref{prob:zeroty} of our introduction, these models are expected to add further $0$-types to the realizability 1-topos.  It may be that they can be localized to give a model equivalent to ours, but we are not currently aware of any such results.

As already explained, the model structure on $\mathsf{Gpd}(\hPP)$ specified in Theorem \ref{thm:modelstructure} is a restriction to the 1-types of the injective model structure on simplicial presheaves, which was used in~\cite{Shulman2019} to build models of HoTT with univalent universes. We expect the fibrant objects in our category of ``coherent 1-stacks'' to also be closed under exponentials (and $\Pi$-types of fibrations), and that there is a univalent universe of $0$-types.  In the present case of realizability, we even expect there to be an \emph{impredicative} such universe.  We leave this for future work (in progress as~\cite{AAB}).

\subsection*{Acknowledgements}

We are grateful to Mathieu Anel, Reid Barton, Jonas Frey, Pino Rosolini, Michael Shulman, and Andrew Swan, for sharing their ideas and advice, for essential contributions to the definitions and proofs, and for detailed comments on a draft.

We are also grateful to the University of Genoa for supporting the first author's visit, during which this work was begun.
This material is based upon work supported by the Air Force Office of Scientific Research under awards number FA9550-21-1-0009 and  FA9550-23-1-0434 (for the first author), and by the Italian Ministry of University and Research (MUR) under award ``PNRR - Young Researchers - SOE 0000071'' (for the second author).

%
%

\newcommand{\etalchar}[1]{$^{#1}$}

\end{document}